\footnotesize\color{darkgray},           
\it\color[RGB]{0,96,96},                
\slshape\color[RGB]{128,0,0},   
\lstdefinestyle{Python}{
    language=Python,
}
\theoremstyle{plain}
\newtheorem{theorem}{Theorem}
\newtheorem{claim}{Claim}
\newtheorem{lemma}[theorem]{Lemma}
\numberwithin{theorem}{section}
\newtheorem{corollary}[theorem]{Corollary}
\newtheorem{problem}{Problem}
\newtheorem{proposition}[theorem]{Proposition}
\newtheorem{definition}[theorem]{Definition}
\newtheorem{remark}[theorem]{Remark}
\begin{document}

\title{\LARGE Local rainbow colorings of hypergraphs 
\thanks{Emails: 
{\tt zy.li@mail.sdu.edu.cn},
{\tt wcliu@sdu.edu.cn},
{\tt gwsun@mail.sdu.edu.cn},\\   \indent\indent\indent\indent\indent\indent
{\tt xiawang@mail.sdu.edu.cn},
{\tt snwei@mail.sdu.edu.cn}
}}

\author{Zhenyu Li
\quad
Weichan Liu
\quad 
Guowei Sun\\
Xia Wang
\quad
Shunan Wei
\\
\small{School of Mathematics, Shandong University, Jinan, China.}
}

\date{}
 
\maketitle
\begin{abstract}
In this paper, we generalize the concepts related to rainbow coloring to hypergraphs. Specifically, an $(n,r,H)$-local coloring is defined as a collection of $n$ edge-colorings, $f_v: E(K^{(r)}_n) \rightarrow [k]$ for each vertex $v$ in the complete $r$-uniform hypergraph $K^{(r)}_n$, with the property that for any copy $T$ of $H$ in $K^{(r)}_n$, there exists at least one vertex $u$ in $T$ such that $f_u$ provides a rainbow edge-coloring of $T$ (i.e., no two edges in $T$ share the same color under $f_u$). The minimum number of colors required for this coloring is denoted as the local rainbow coloring number $C_r(n, H)$.
 
We first establish an upper bound of the local rainbow coloring number for $r$-uniform hypergraphs $H$ consisting of $h$ vertices, that is, $C_r(n, H)= O\left( n^{\frac{h-r}{h}} \cdot h^{2r + \frac{r}{h}} \right)$. Furthermore, we identify a set of $r$-uniform hypergraphs whose local rainbow coloring numbers are bounded by a constant. A notable special case indicates that $C_3(n,H) \leq C(H)$ for some constant $C(H)$ depending only on $H$ if and only if $H$ contains at most 3 edges and does not belong to a specific set of three well-structured hypergraphs, possibly augmented with isolated vertices. We further establish two 3-uniform hypergraphs $H$ of particular interest for which $C_3(n,H) = n^{o(1)}$.
 
Regarding lower bounds, we demonstrate that for every $r$-uniform hypergraph $H$ with sufficiently many edges, there exists a constant $b = b(H) > 0$ such that $C_r(n,H) = \Omega(n^b)$. Additionally, we obtain lower bounds for several hypergraphs of specific interest.
     
\noindent \textbf{Keywords:} edge coloring; rainbow coloring; hypergraph.
\end{abstract}
\section{Introduction}

In 1993, Karchmer and Wigderson \cite{KW93} introduced the \textit{Span Program}, a computational model rooted in the principles of linear algebra. 
The Span Program has attracted widespread interest due to its higher computational efficiency compared to models such as Switching Networks and DeMorgan Formulas \cite{jukna2011extremal}. 
To establish lower bounds for the minimum possible size of the Span Program, Karchmer and Wigderson \cite{KW93} devised the Fusion Method.
Driven by this motivation, Wigderson \cite{zbMATH00524142} suggested three problems. One of these problems is shown below, which is also highlighted as a research problem  \cite[Problem 9 in Chapter 16]{jukna2011extremal}.

\begin{problem}[\cite{zbMATH00524142,jukna2011extremal}]\label{originalproblem}
Define $k$ as the smallest number such that there exist $n$ colorings, denoted as $c_1, c_2, \ldots, c_n$, of the $n$-dimensional cube $\mathbb{Q}_n:=\{0, 1\}^n$ using $k$ colors satisfying the condition that
for any triple of distinct vectors $x, y, z$, there exists at least one coordinate $i$ where not all three vectors have the same value, and the colors assigned to these vectors at the $i$th coordinate, i.e., $c_i(x), c_i(y), c_i(z)$, are all different from each other. The task is to determine or estimate the smallest possible value of $k$ for which such a set of colorings can be found.
\end{problem}

This problem is difficult, as demonstrated by Karchmer and Wigderson \cite{KW93}, who showed that $k$ inevitably increases with $n$, and more precisely $k$ is at least $\Omega\left(   \frac{\log \log^* n}{ \log \log \log^* n}  \right)$, where $\log^* n$ is the minimum number $m$ such that starting with $n$ one gets a number that does not exceed 1 by iteratively applying the function $\log_2(x)$ $m$ times. Alon and Ben-Eliezer \cite{zbMATH06056109} improved this bound to $\Omega\left(   (\frac{\log n}{ \log \log n})^{1/4}  \right)$ by considering the vectors whose Hamming weight is 2. Here, the \textit{Hamming weight} refers to the number of non-zero symbols in a string of symbols. 
We set $\mathbb{Q}_n[r]:=\{x\in \mathbb{Q}_n~|~{\rm Hamming ~weight ~of } ~x ~{\rm is~} r\}$.

In \cite{zbMATH06056109}, Alon and Ben-Eliezer constructed an auxiliary graph $G$ as follows. The vertex set of $G$ is the $n$ coordinates of $\mathbb{Q}_n[2]$, and for any $i,j\in V(G)$, $ij$ is an edge of $G$ if and only if the $n$-dimensional vector $x_{ij}$ belongs to $\mathbb{Q}_n[2]$, where $x_{ij}$ has only the positions $i$ and $j$ as $1$s and the other positions as $0$s.
Obviously, $G$ is a complete labeled graph $K_n$. The collection of colorings $c_1, c_2, \ldots, c_n$ in Problem \ref{originalproblem} corresponds to a set of $n$ edge-colorings
\[
\{f_v: E(K_n) \rightarrow [k]\ | \ v \in V(K_n)\},
\]
and any three vectors in $\mathbb{Q}_n[2]$ correspond to three distinct edges in $K_n$. Through this construction, the situation of Problem \ref{originalproblem} in $\mathbb{Q}_n[2]$ is transformed into a problem of local rainbow colorings of graphs.
Formally, Alon and Ben-Eliezer gave the following definition.
\begin{definition}[\cite{zbMATH06056109}]\label{oridef}
Define $C_2(n, H)$ as the minimum $k$ such that there exists a set of $n$ edge-colorings, $f_v: E(K_n) \rightarrow [k]$ for every $v \in V(K_n)$, where for any copy $T$ of $H$ in $K_n$, there is a vertex $u \in V(T)$ so that $f_u$ is a rainbow edge-coloring of $T$, meaning no two edges of $T$ share the same color under $f_u$. Such a set of edge-colorings is called an \emph{$(n, H)$-local coloring}.
\end{definition}

The value of $C_2(n,H)$ is a lower bound of Problem \ref{originalproblem} when $H$ consists of three edges and no isolated vertex. Using this idea,  Alon and Ben-Eliezer \cite{zbMATH06056109} established a lower bound of Problem \ref{originalproblem} by showing $C_2(n, P_3) = \Omega\left(   (\frac{\log n}{ \log \log n})^{1/4}  \right)$ as mentioned before. Additionally, they provided a specific construction showing that $C_2(n,P_3)$ does not exceed $2\lceil \sqrt{n} \rceil$. Later, Janzer and Janzer \cite{JANZER2024134} demonstrated that $C_2(n, P_3) = n^{o(1)}$. This implies that $C_2(n, H)$ may not be a polynomial in $n$, answering a question of Alon and Ben-Eliezer in the negative. In the same paper (specifically Section 4.2), Janzer and Janzer mentioned that one might give a better lower bound of Problem $\ref{originalproblem}$ than $ n^{o(1)}$ by considering the $s$-graph induced by three hyperedges $x=x_0\cup S$, $y=y_0\cup S$ and $z=z_0\cup S$. Here, $x_0$, $y_0$, $z_0$ are three edges that form $P_3$ and $S$ is a vertex set of size $s-2$, where $s$ is an integer that at least logarithmic in $n$. 
This inspires us to transform the situation of Problem \ref{originalproblem} in $\mathbb{Q}_n[r]$ into a problem of local rainbow coloring of hypergraphs.
To this end, let each coordinate within $\mathbb{Q}_n[r]$ correspond to a vertex in an $r$-uniform hypergraph $G$, and let each vector, characterized by Hamming weights of $r$, correspond to a hyperedge in $G$, where the hyperedge contains vertex $i$ if and only if the $i$th coordinate of the corresponding vector is 1. It is apparent that $G$ is a labeled complete hypergraph $K^{(r)}_n$. The collection of colorings $c_1, c_2, \ldots, c_n$ in Problem \ref{originalproblem} corresponds to a set of $n$ edge-colorings $\{f_v: E(K^{(r)}_n) \rightarrow [k]\ | \ v \in V(K_n) \}$.
This more general framework has potential applications for Problem \ref{originalproblem} and its related issues while also possessing independent interests. 
Formally, we give the following concept.

\begin{definition}
An \emph{$(n,r,H)$-local coloring} is a set of $n$ $k$-edge-colorings, $f_v: E(K^{(r)}_n) \rightarrow [k]$ for $v \in V(K^{(r)}_n)$, such that for any copy $T$ of $H$ in $K^{(r)}_n$, there exists a vertex $u \in T$ so that $f_u$ is a rainbow edge-coloring of $T$, which means that no two edges of $T$ share the same color under $f_u$. Denote by the \emph{local rainbow coloring number $C_r(n, H)$} the minimum number $k$ of colors such that there exists an $(n, r, H)$-local coloring of $K^{(r)}_n$.
\end{definition}

For 2-graphs, Alon and Ben-Eliezer \cite{zbMATH06056109} demonstrated that if $H$ contains at most 3 edges and is neither $P_3$ nor $P_3$ augmented with isolated vertices, then $C_2(n,H)$ is at most 5 for any $n$. For general upper bounds, they proved that
$C_2(n,H)=O(n^{\frac{h-2}{h}}\cdot h^4)$, where $h$ is the number of vertices in $H$.  For lower bounds, they proved that $C_2(n,H)=\Omega(n^b)$ for some positive constant $b = b(H)$ when $H$ has at least 13 edges. This bound is later lowered to 6 by Cheng and Xu \cite{zbMATH07882976} and further refined to 4 by Janzer and Janzer \cite{JANZER2024134}, with the exception of the triangle augmented with a pendant edge.

In this paper, we study similar problems in hypergraphs. Let $r$ and $h$ be some positive integers.  We are keen on identifying $r$-graphs $H$ for which $C_r(n, H)$ is bounded by a constant $C(H)$. For $r=3$, let $\mathcal{H}$ be a family of all $3$-graphs with $3$ edges except for the following three hypergraphs, possibly with some isolated vertices added,
    \begin{description}
    \item[${\rm TP}_3$] --- a 3-graph on vertex set $\{a,b,c,d,e\}$ and edge set $\big\{ \{a,b,c\}, \{b,c,d\}, \{c,d,e\}  \big\}$;
    \item[${\rm SP}_3$] --- a 3-graph on vertex set $\{a,b,c,d,e,f\}$ and edge set $\big\{ \{a,b,c\}, \{b,c,d\}, \{d,e,f\}  \big\}$; 
    \item[${\rm LC}_3$] --- a 3-graph on vertex set $\{a,b,c,d,e,f\}$ and edge set $\big\{ \{a,b,d\}, \{b,c,e\}, \{a,c,f\} \big\}$.
\end{description}(see Figures \ref{fig:tightp3}, \ref{fig:tightloosepath}, \ref{fig:loosec3}).
\begin{figure}[H]
\centering
\begin{minipage}[b]{0.3\linewidth}
\centering
\includegraphics[width=\linewidth]{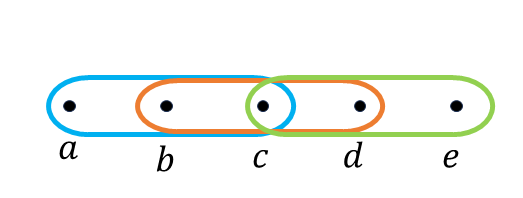}
\caption{A tight $P_3$, abbreviated as ${\rm TP}_3$.}
\label{fig:tightp3}
\end{minipage}\hfill
\begin{minipage}[b]{0.3\linewidth}
\centering
\includegraphics[width=1.1\linewidth]{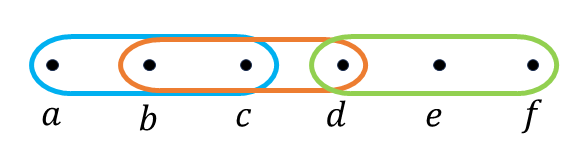}
\caption{A special $P_3$, abbreviated as ${\rm SP}_3$.}
\label{fig:tightloosepath}
\end{minipage}\hfill
\begin{minipage}[b]{0.3\linewidth}
\centering
\includegraphics[width=\linewidth]{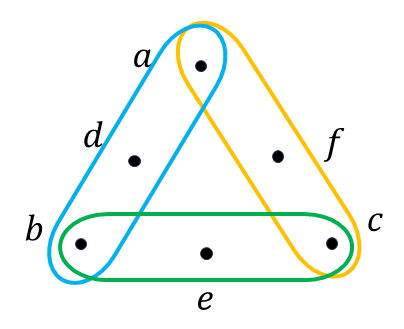}
\caption{A loose $C_3$, abbreviated as ${\rm LC}_3$.}
\label{fig:loosec3}
\end{minipage}
\end{figure}\begin{theorem}\label{3constantupperbound}
For any $3$-graph $H$, we have the following results,
$$
 C_3(n,H)\left\{
\begin{array}{cc}
 \le7    &  H\in \mathcal{H};\\
 =\Omega \left( \frac{\log^{(3)}(n)}{\log^{(4)}(n)} \right)^{\frac{1}{16}}& H\notin \mathcal{H},
\end{array} \right.
$$
where $\log^{(r)}(n) :=\underbrace{\log...\log }_{r} n$.
\end{theorem}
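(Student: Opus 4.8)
The plan is to treat the two regimes separately. For the upper bound ($H\in\mathcal H$) I will give an explicit coloring with at most $7$ colors; isolated vertices may be dropped at the outset, because they only enlarge the set of admissible rainbow vertices, so a coloring that is valid for the isolated-free core $H'$ of $H$ is valid for $H$, and it suffices to bound $C_3(n,H')$ over the finitely many isolated-free $3$-edge $3$-graphs other than ${\rm TP}_3$, ${\rm SP}_3$, ${\rm LC}_3$. For the lower bound ($H\notin\mathcal H$, the case of interest being $H$ equal to one of these three exceptions, possibly with isolated vertices) I will reduce to the Alon--Ben-Eliezer graph bound $C_2(m,P_3)=\Omega\!\left(\left(\frac{\log m}{\log\log m}\right)^{1/4}\right)$; hypergraphs with more edges are subsumed, with room to spare, by the general polynomial lower bound.

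For the upper bound I would enumerate the isolated-free $3$-edge $3$-graphs up to isomorphism, organized by the intersection pattern $(|e_i\cap e_j|)_{1\le i<j\le 3}$ of their edges and refined by how these pairwise intersections share vertices (so that, for example, the pattern $(1,1,1)$ splits into the loose star, which is good, and the loose triangle ${\rm LC}_3$). For each type other than ${\rm TP}_3$, ${\rm SP}_3$, ${\rm LC}_3$ the goal is to exhibit colorings $\{f_v\}$ with at most $7$ colors under which every copy $T$ has a rainbow vertex; the unifying idea is that such a ``good'' $H$ has a vertex whose incidence pattern with the three edges is distinguishing, so a coloring of the form $f_v(e)=\phi(v,e)$ --- recording how $v$ meets $e$ together with a little auxiliary data --- separates the three edges as seen from that vertex. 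The constant $7$ is then the largest number of colors required among the finitely many good types, so the work is the case analysis and the verification of each gadget rather than any single hard step.

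For the lower bound the key observation is that each exceptional hypergraph hides a graph $P_3$. The cleanest case is ${\rm TP}_3$, which is exactly the $3$-edge path with edges $ab,bd,de$ together with a single vertex $c$ adjoined to every edge --- the Janzer--Janzer common-core construction with $|S|=1$. Fixing a core vertex $c$ and setting $g_v(xy):=f_v(\{x,y,c\})$ for $v\ne c$ turns the hypergraph colorings through $c$ into candidate graph colorings of $K_{n-1}$, under which copies of $P_3$ correspond exactly to copies of ${\rm TP}_3$ whose common vertex is $c$. An $(n,3,H)$-coloring with $k$ colors should then be pushed down to an $(m,P_3)$-local coloring on $m\approx\log\log n$ vertices using roughly $k^{4}$ colors, whence $k^{4}\ge C_2(m,P_3)=\Omega\!\left(\left(\frac{\log^{(3)}(n)}{\log^{(4)}(n)}\right)^{1/4}\right)$ and hence $k=\Omega\!\left(\left(\frac{\log^{(3)}(n)}{\log^{(4)}(n)}\right)^{1/16}\right)$. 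The two extra logarithms are lost in shrinking $n$ to the subuniverse of size $m$, and the fourth power reflects packing several hypergraph color classes into one graph color.

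The main obstacle lies in the push-down. First, the rainbow vertex guaranteed for a copy of ${\rm TP}_3$ may be the core vertex $c$, which is not a vertex of the derived $P_3$; and if $H$ carries isolated vertices these too may serve as the rainbow vertex of a copy, again outside the graph. Both phenomena give the colorer extra free rainbow vertices that the naive reduction cannot use, so they must be neutralized rather than ignored. To do this I would apply a Ramsey/sunflower-type pruning that passes to a large structured subuniverse on which, for every relevant copy, the rainbow vertex is forced into the ``graph'' part; keeping the number of colors bounded throughout this pruning is what shrinks the universe to size about $\log\log n$ and degrades the exponent from $1/4$ to $1/16$. Finally, for ${\rm SP}_3$ and ${\rm LC}_3$, which have no vertex common to all three edges, I would fix instead the shared pair or loose core dictated by their intersection pattern and run the analogous reduction; checking that it still isolates a genuine $P_3$ after pruning is the most delicate part of the argument.
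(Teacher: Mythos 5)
Your upper-bound plan is sound in spirit, and the paper realizes the ``incidence pattern'' idea you gesture at in a single unified construction rather than a case analysis: fix one global order $\sigma$ on $V(K_n^{(3)})$ and let $f_v(e)\in[7]$ record whether $v\in e$ and the rank of $\sigma(v)$ among the vertices of $e$ (the sets $T_v^1,\dots,T_v^{2r+1}$). Calling $H$ \emph{2-locally large} when some ordering of $V(H)$ puts two edges of $H$ into a common class at every vertex, the non-exceptional $3$-edge graphs are exactly the ones that are not 2-locally large, so this one colouring is rainbow at some vertex of every copy. The real problems are in your lower bound.

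First, the reduction to $C_2(m,P_3)$ only has a chance for ${\rm TP}_3$, whose three edges share the vertex $c$. ${\rm SP}_3$ and ${\rm LC}_3$ have \emph{no} common vertex (pairwise intersections $\{b,c\},\{d\},\emptyset$ and $\{b\},\{c\},\{a\}$ respectively), so there is no core to fix and no link containing all three edges; projecting ${\rm LC}_3$ along its degree-one vertices even yields a triangle, for which $C_2(n,K_3)$ is bounded by a constant. The paper sidesteps this entirely with one Ramsey argument valid for every 2-locally large $H$ (Theorem~\ref{stronger}): colour each $4$-set $e$ of $K_n^{(4)}$ by the $16$-tuple of values $f_{u_i}(e\setminus\{u_j\})$, extract a monochromatic $K_s^{(4)}$ via the multicolour hypergraph Ramsey theorem, and use 2-local largeness to show every copy of $H$ inside it (avoiding the top-labelled vertex) fails to be rainbow at every one of its vertices; the exponent $1/16=(r+1)^{-2}$ and the two extra logarithms come from the tower-type Ramsey bound, not from packing colours into a graph reduction. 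Second, your claim that $3$-graphs with more than three edges are ``subsumed, with room to spare, by the general polynomial lower bound'' is false: Theorem~\ref{generalbound} requires at least $g(3,4)$ edges (enough to force a sunflower with four petals), and a graph such as $K_4^{(3)}$ has only four edges, contains no sunflower with three or four petals, and contains none of ${\rm TP}_3$, ${\rm SP}_3$, ${\rm LC}_3$ as a subgraph (they need $5$ or $6$ vertices), so neither the sunflower bound nor the subgraph-monotonicity lemma reaches it. The paper covers all such graphs by verifying, by computer, that every $3$-graph with exactly four edges is 2-locally large and that 2-local largeness passes to supergraphs. Even in the one case where your reduction is natural, ${\rm TP}_3$, the step you defer---forcing the rainbow vertex off the core $c$ by ``Ramsey/sunflower-type pruning''---is precisely the content of the proof, so as written the lower half of the theorem is not established for any of the required hypergraphs.
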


For some specific hypergraphs, such as special paths, cliques, sunflowers, and matchings, we can obtain better lower bounds, which are presented in Theorems \ref{STP3} -- \ref{cor}.

Let $P_t$ be a (2-graph) path consisting of $t$ edges. 
As we mentioned before, Alon and Ben-Eliezer \cite{zbMATH06056109} gave a lower bound for $C_2(n,P_3)$. They also showed that $C_2(n,P_7),\ C_2(n,P_8) =\Omega (n^{1/6})$ and $C_2(n, P_t) = \Omega(n^{1/4})$ for $t\geq 9$. Later, 
Cheng and Xu \cite{zbMATH07882976} proved that
$C_2(n, P_4) = \Omega(n^{1/5})$,
$C_2(n, P_t) = \Omega(n^{1/3})$ for $t \in \{5, 6, 7\}$, and
$C_2(n, P_t) = \Omega(n^{1/2})$ for $t \geq 8$. For $3$-graphs,
based on ${\rm SP_3}$, we study the following two special classes of 3-uniform paths with $t$ edges:
  \begin{description}
    \item[${\rm SP}_t^1$] --- a 3-graph on vertex set $\{v_1,...,v_{\lceil\frac{3t+3}{2}\rceil}\}$ and edge set 
    \[\big\{ (v_1,v_2,v_3),(v_3,v_4,v_5),(v_4,v_5,v_6),...,(v_{\lceil\frac{3t+3}{2}\rceil-2},v_{\lceil\frac{3t+3}{2}\rceil-1},v_{\lceil\frac{3t+3}{2}\rceil})  \big\};\]
    \item[${\rm SP}_t^2$] --- a 3-graph on vertex set $\{v_1,...,v_{\lceil\frac{3t+2}{2}\rceil}\}$ and edge set 
    \[ \big\{ (v_1,v_2,v_3), (v_2,v_3,v_4), (v_4,v_5,v_6),\cdots(v_{\lceil\frac{3t+2}{2}\rceil-2},v_{\lceil\frac{3t+2}{2}\rceil-1},v_{\lceil\frac{3t+2}{2}\rceil}) \big\}.
    \]
\end{description}
Clearly, $\rm{SP}_t^1$ and $\rm{SP}_t^2$ become identical when $t$ is odd.
We establish the following results.

\begin{theorem}\label{STP3}
The following holds:

\begin{itemize}
    \item $C_3(n,{\rm SP}_3)=\Omega((\frac{\log n}{\log \log n})^{\frac{1}{8}})$;
    
    \item $C_3(n, {\rm SP_4^1})=\Omega(n^{\frac{1}{8}})$ and  $C_3(n, {\rm SP_4^2})=\Omega(n^{\frac{1}{7}})$;

\item $C_3(n, {\rm SP^1_t})=\Omega(n^{\frac{1}{3}})$ and $C_3(n, {\rm SP^2_t})=\Omega(n^{\frac{1}{3}})$ for $t\ge 5$.

\end{itemize}
\end{theorem}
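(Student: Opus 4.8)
The plan is to prove all three items by contradiction under a common template: assuming that $K^{(3)}_n$ admits an $(n,3,H)$-local coloring with $k$ colors, I will exhibit a single copy of the target path $H$ that is non-rainbow at \emph{every} one of its vertices (i.e.\ at each vertex $u$ of the copy two of its edges collide under $f_u$), which contradicts the defining property as soon as $k$ is too small relative to $n$. The three items split into two regimes. The bound for ${\rm SP}_3$ is polylogarithmic and will come from an iterated pigeonhole in the spirit of the Alon--Ben-Eliezer argument for $C_2(n,P_3)$, whereas the bounds for ${\rm SP}_4^1,{\rm SP}_4^2$ and for ${\rm SP}_t^1,{\rm SP}_t^2$ with $t\ge 5$ are polynomial and will come from counting and K\H{o}v\'{a}ri--S\'{o}s--Tur\'{a}n-type arguments, respectively.

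For ${\rm SP}_3$, with edges $\{a,b,c\},\{b,c,d\},\{d,e,f\}$, the natural engine is the tight junction $\{b,c\}$: fixing a pair and letting the apex range over the $n-2$ completions $\{b,c,z\}$, the colors $f_u(\{b,c,z\})$ lie in $[k]$, so some class has size at least $(n-2)/k$. Iterating this pigeonhole across the three admissible witnessing vertices of each edge and across the loose junction at $d$ builds a nested family of monochromatic reservoirs whose depth of recursion controls the feasible $n$; I expect the recursion to yield $n\le \exp(\mathrm{poly}(k))$. The degradation of the exponent from $1/4$ in the graph case to $1/8$ here should be traced to the two extra bookkeeping coordinates carried by a 3-edge (its third vertex, and the second shared vertex of each tight junction), which roughly doubles the number of iteration levels; alternatively this item may be obtained by distilling from $\{f_v\}$ a genuine $(m,P_3)$-local coloring on an auxiliary vertex set of size $m$ and invoking the known bound for $C_2(m,P_3)$, the exponent $1/8$ then reflecting the parameter loss $\log m \approx \sqrt{\log n}$ in the distillation.

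For the polynomial items the workhorse is the same tight-pair pigeonhole: for any pair $\{x,y\}$ and any coloring $f_u$, the $n-2$ edges $\{x,y,z\}$ fall into $k$ color classes, so choosing the apexes from a common class forces the two tight edges of a junction to collide under $f_u$. I would embed a bad copy of ${\rm SP}_t^1$ (respectively ${\rm SP}_t^2$) edge by edge along its tight/loose alternation, at each step selecting the free coordinates from a common class so that the vertex responsible for each junction witnesses a monochromatic pair of incident path-edges. For $t=4$ the copies have only $|V({\rm SP}_4^1)|=8$ and $|V({\rm SP}_4^2)|=7$ vertices and the alternation is too short to recycle reservoirs, so the argument degenerates to charging one factor of $k$ per vertex, giving $n=O(k^{|V(H)|})$ and hence the exponents $1/8$ and $1/7$. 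For $t\ge 5$ there are enough tight pairs to run the sharper codegree argument: analogously to how Cheng--Xu reach $n^{1/2}$ for long 2-graph paths by forbidding a $K_{2,2}$-type collision, here one records for each vertex a bounded color-signature relative to a triple of reference coordinates and shows that once $n$ exceeds $Ck^{3}$ two vertices share a signature, producing a swap that can be routed through the extra junctions to make the whole path non-rainbow; this caps the bound at the $r$-uniform saturation value $1/r=1/3$.

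The main obstacle throughout is the interaction between the per-vertex non-rainbow constraints: since consecutive edges of the path share vertices, a collision forced at one vertex shrinks the pool of admissible completions for the neighbouring constraints, so the naive product of pigeonhole factors overcounts. I would control this by ordering the constraints along the path and embedding greedily, proving that after each tight junction is fixed a linear-in-$n$ reservoir of apex vertices survives for the next one, and that the loose junctions---which share a single vertex and so demand a collision among edges with four free coordinates---do not drain these reservoirs. The most delicate points are verifying that the signature-swap for $t\ge 5$ genuinely threads all intermediate junctions (this is where $t\ge 5$ rather than $t=4$ is essential), and optimizing which of the three witnessing vertices of each edge is charged with each forced collision so that the exponents $1/8$, $1/7$ and $1/3$ are attained exactly.
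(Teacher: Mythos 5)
Your skeleton --- build one copy of the path that is non-rainbow at \emph{every} vertex, via pigeonhole and convexity counts anchored at the tight junctions --- is exactly the paper's, and your exponent heuristics are right where you are concrete: for $t=4$ the paper indeed gets $n=O(k^{|V(H)|})$ (one pigeonhole costing $k^{2}$ to find a large set $A'$ of apexes $a$ with $f_a(ae_1)=f_a(ae_2)=f_a(ae_3)$ for three common reference pairs, then a $k^{6}$-signature pigeonhole on $A'$ matching two apexes on the middle edge), yielding $1/8$ and $1/7$. The other two items, however, rest on steps that are not yet arguments. For ${\rm SP}_3$ the missing idea is the exact shape of the double pigeonhole: the paper fixes a reference set $Y$ of $k^{4}+1$ apexes, assigns to each disjoint pair $z\subseteq V\setminus Y$ the $(k^{4}+1)^{2}$-dimensional vector $\bigl(f_{y_i}(y_j z)\bigr)_{i,j}$, and pigeonholes over the $k^{(k^4+1)^2}$ possible vectors to find $z_1=\{x_1,x_2\}$ and $z_2=\{x_3,x_4\}$ with identical vectors; this one step already supplies the collisions $f_{y}(y'z_1)=f_{y}(y'z_2)$ needed at the loose junction. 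A second pigeonhole over $Y$ then finds $y_1,y_2$ with $f_{x_i}(x_1x_2y_1)=f_{x_i}(x_1x_2y_2)$ for all four $i$ \emph{simultaneously} --- it is because four colorings must agree that $|Y|$ must exceed $k^{4}$, which is exactly where $1/8$ rather than $1/4$ comes from. Your ``nested reservoirs whose depth of recursion controls $n$'' does not pin this down, and your fallback of distilling an $(m,P_3)$-local coloring and quoting the known $C_2(m,P_3)$ bound does not go through as stated: the auxiliary graph on $Y\cup\mathcal{Z}$ is bipartite, copies of $P_3$ lying inside one side correspond to no copy of ${\rm SP}_3$, so the distilled colorings are not an $(m,P_3)$-local coloring of a complete graph and the black-box invocation is invalid.

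For $t\ge 5$ your ``signature-swap routed through the extra junctions'' is the step most likely to fail, and it is not the mechanism. The paper proves a standalone claim: if $H$ contains edges $e_1,e_2,e_3,e_4$ with $|e_1\cap e_2|=d>0$ and $e_3,e_4$ disjoint from $e_1\cup e_2$, then $C_r(n,H)=\Omega\bigl(n^{1/(2(r-d)+1)}\bigr)$. This is run through a sunflower with core $D=e_1\cap e_2$: after $d$ pigeonholes one gets roughly $n/k^{d}$ petals monochromatic under every $f_{v}$ with $v\in D$, a triple count gives order $n^{3}/k^{2d+1}$ collisions $(p,e',e'')$ with $f_p(e')=f_p(e'')$, and a final pigeonhole places $e_3,e_4$ inside the surviving vertex set so that every vertex of the copy inherits one of the three collision types. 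The exponent $1/3$ is $1/(2(r-d)+1)$ with $r=3$, $d=2$; it is not a ``$1/r$ saturation.'' And the reason $t\ge 5$ is needed is purely structural: only then does ${\rm SP}^1_t$ (resp.\ ${\rm SP}^2_t$) contain two edges disjoint from the union of some tight pair, which is the hypothesis of the claim. Without isolating some such reusable lemma, your plan of ``optimizing which witnessing vertex is charged with each collision'' has no concrete engine to drive it.
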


For complete graphs $K_t$, Cheng and Xu \cite{zbMATH07882976} established that $C_2(n, K_t) = \Omega(n^{2/3})$ for $t \geq 8$. 
Subsequently, Janzer and Janzer \cite{JANZER2024134} proved that $C_2(n, K_t) = \Omega\left(n^{1 - \frac{4}{t + 2}}\right)$ for even $t \geq 4$ and 
$C(n, K_t) = \Omega\left(n^{1 - \frac{10}{t - 3}}\right)$ for sufficiently large odd $t$.
We determine a polynomial lower bound for the local rainbow coloring number of large complete hypergraphs.

\begin{theorem}\label{thm clique}
$C_r(n,K_{p}^{(r)})=\Omega(n^{\frac{r}{r+1}})$ for $p\geq(2r-1)r+2(r-1)$ and $r\geq 2$.
\end{theorem}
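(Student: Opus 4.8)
The plan is to argue by contradiction. I assume that an $(n,r,K_{p}^{(r)})$-local coloring $\{f_v\}$ with $k$ colors exists, and I aim to show that if $k=o(n^{\frac{r}{r+1}})$ then some copy of $K_{p}^{(r)}$ has no rainbow vertex, contradicting the definition of a local coloring. Call a vertex set $S$ with $|S|=p$ \emph{bad} if for every $u\in S$ the coloring $f_u$ assigns the same color to two distinct $r$-subsets (edges) lying inside $S$. A bad $S$ is precisely a copy of $K_{p}^{(r)}$ on which no vertex is rainbow, so it suffices to produce one. Throughout I shall say that an $(r+1)$-set $W$ \emph{fools} $u$ if two of the $r$-subsets of $W$ receive the same $f_u$-color; then $W$ fooling $u$ forces $f_u$ to be non-rainbow on every $S\supseteq W$ with $u\in S$.

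First I would quantify how abundant such fooling configurations are for a fixed vertex $u$. For each $(r-1)$-set $D$, the ``link'' coloring $x\mapsto f_u(D\cup x)$ maps the remaining $n-r+1$ vertices into $[k]$, so by convexity of $\binom{\cdot}{2}$ it has at least roughly $n^{2}/k$ color-repeated pairs $\{a,b\}$ with $f_u(D\cup a)=f_u(D\cup b)$. Each such pair yields an $(r+1)$-set $D\cup\{a,b\}$ that fools $u$; summing over the $\binom{n}{r-1}$ choices of $D$ and correcting for the bounded multiplicity with which each $(r+1)$-set is counted, $f_u$ is fooled by $\gtrsim n^{r+1}/k$ distinct $(r+1)$-sets. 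A double count over incidences $(u,W)$ between vertices and the $(r+1)$-sets fooling them then shows, by averaging against the $\binom{n}{r+1}$ possible $W$'s, that some single $(r+1)$-set $W^{*}$ fools a set $F$ of at least $\gtrsim n/k$ vertices.

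It remains to also fool the $r+1$ vertices of $W^{*}$ itself and to fit everything into $p$ vertices; this is where the hypothesis $k=o(n^{\frac{r}{r+1}})$ is used. It forces $|F|\ge c\,n/k$ to exceed the pigeonhole threshold at which $\binom{|F|}{r}>k$, namely $|F|\gtrsim k^{1/r}$, because $n/k\gtrsim k^{1/r}$ is equivalent to $k\lesssim n^{\frac{r}{r+1}}$. Hence for each $w\in W^{*}$ the restriction of $f_w$ to the $r$-subsets of $F$ already contains a monochromatic pair. Choosing such a witnessing pair inside $F$ for every $w\in W^{*}$, together with a few filler vertices of $F$, I assemble a $p$-set $S=W^{*}\cup A$ with $A\subseteq F$: every vertex of $A$ is fooled by $W^{*}\subseteq S$, and every vertex of $W^{*}$ is fooled by its own witness inside $A$. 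A careful count of how many vertices the pool $W^{*}$ and the $r+1$ witnesses consume, reusing vertices wherever possible, shows that $p\ge(2r-1)r+2(r-1)$ vertices suffice, so $S$ is bad, the desired contradiction.

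The main obstacle is the final balancing act rather than any single estimate. The size of $F$ is governed from below by the popularity bound $|F|\gtrsim n/k$, while the ability to re-fool the pool vertices inside $F$ requires $|F|\gtrsim k^{1/r}$; equating these two constraints is exactly what pins the exponent at $\frac{r}{r+1}$, so the heart of the matter is verifying that under $k=o(n^{\frac{r}{r+1}})$ both demands hold simultaneously. The accompanying difficulty is that the witnesses supplied by pigeonhole are only guaranteed to span at most $2r$ (rather than $r+1$) vertices and may overlap in an uncontrolled way; arranging that the pool together with all $r+1$ witnesses embeds within the vertex budget $p\ge(2r-1)r+2(r-1)$, by forcing enough reuse of vertices, is the delicate technical point of the proof.
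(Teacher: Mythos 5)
Your overall strategy is the same as the paper's: a convexity/double-counting argument to find one small ``popular'' configuration that fools many vertices, followed by a pigeonhole step inside that popular vertex set to fool the configuration's own vertices, with the exponent $\frac{r}{r+1}$ emerging from balancing $n/k$ against the threshold $k^{1/r}$. (The paper works in $K_{2n}^{(r)}$ with a bipartition $A\cup B$ and fixes a pair of $(r-1)$-sets $\hat b_1,\hat b_2\subseteq B$ rather than a single $(r+1)$-set, but this is cosmetic.) However, there is a genuine gap at exactly the point you flag as ``the delicate technical point'': you never supply the idea that makes the vertex budget work. If you apply the pigeonhole independently to each of the $r+1$ vertices $w\in W^{*}$, the resulting witness pairs are two $r$-subsets of $F$ with no control over their mutual overlap, so in the worst case the witnesses occupy $2r(r+1)$ fresh vertices and your bad set needs up to $(r+1)+2r(r+1)=2r^2+3r+1$ vertices, which exceeds the stated bound $p\geq(2r-1)r+2(r-1)=2r^2+r-2$. ``Reusing vertices wherever possible'' is not something pigeonhole gives you for free.

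The paper closes this hole with a specific device: since $K[A']$ has at least $1000rk$ edges, for each pool vertex $b$ one can greedily partition $E(K[A'])$ into monochromatic (under $f_b$) triples plus a small remainder $R_b$ of at most $2k$ edges; because $\bigl|\bigcup_b R_b\bigr|$ is a small fraction of $E(K[A'])$, there is a single edge $e_0$ that lies in a monochromatic part for \emph{every} pool vertex simultaneously, and the witness pair for $b$ is then $\{e_0,e_b\}$ with $e_b$ in the same triple. All witness pairs share $e_0$, so the witnesses occupy at most $(2r-1)r$ vertices in total, and the pool contributes at most $2(r-1)$ more --- exactly matching $p$. Without this common-edge mechanism (or an equivalent one), your construction does not fit inside $p$ vertices for the claimed range of $p$, so as written the proof is incomplete.
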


Let $I_t$ be a graph with $t$ independent edges and $S_t$ be a star with $t$ edges.  Alon and Ben-Eliezer \cite{zbMATH06056109} showed that $C_2(n,I_4) = \Omega(n^{1/6})$, $C_2(n,I_t) =\Omega (n^{1/4})$ for $t\geq 5$,
 $C_2(n,S_4) =\Omega (n^{1/4})$ and $C_2(n,S_t) = \Omega(n^{1/3})$ for $t\geq 5$. Cheng and Xu \cite{zbMATH07882976} proved that
$C(n, I_4) = \Omega(n^{1/5})$,
$C(n, I_t) = \Omega(n^{1/3})$ for $t \in \{5, 6\}$,
$C(n, I_t) = \Omega(n^{1/2})$ for $t \geq 7$,
$C(n, S_4) = \Omega(n^{1/3})$, and
$C(n, S_t) = \Omega(n^{1/2})$ for $t \geq 5$. 
In hypergraphs, a star-like structure is often called a sunflower, and it is defined as follows. Let $r,d,m$ be nonnegative integers and let $D$ be a vertex set of size $d$. A \emph{sunflower $S_r(d,m)$} is an $r$-graph with edge set $E(S_r(d,m))=\{e_1,e_2,...,e_m\}$, where $e_i\cap e_j=D$ for every different $i,j\in[m]$. The set $D$ is referred to as the \emph{core} of this sunflower.

\begin{theorem}\label{sunflowers}
    For any integer $t\geq4$, $r\geq 3$ and $d\geq 0$, there exists a sufficiently small constant $c=c(t,r)$ such that $C_r(n,S_r(d,t))\geq\min\{cn^{\frac{1}{2(r-d)+1}},cn^{\frac{1}{d+1}}\}$.
\end{theorem}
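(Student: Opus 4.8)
The plan is to argue by contradiction: assume an $(n,r,S_r(d,t))$-local coloring $\{f_v\}$ uses $k$ colours with $k \le c\min\{n^{1/(2(r-d)+1)},\, n^{1/(d+1)}\}$ for a suitably small $c=c(t,r)$, and construct a single copy $T$ of the sunflower in which no vertex is rainbow, contradicting the definition of a local coloring. Write $s=r-d$ for the petal size; since the $t\ge 2$ edges are distinct we have $s\ge 1$. A vertex $u\in V(T)$ fails to be rainbow precisely when two of the $t$ edges receive the same colour under $f_u$, so it suffices to defeat every core vertex and every petal vertex by exhibiting, for each, a monochromatic pair of edges under its colouring.

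Stage 1 (defeating the core and extracting a rich base pair). Fix an arbitrary core $D$ of size $d$ and regard each petal (an $s$-subset of $V\setminus D$) as coloured by the super-colour $\Phi(p)=(f_w(D\cup p))_{w\in D}\in[k]^{d}$. Two petals in the same $\Phi$-class collide under every core vertex simultaneously. There are $\Theta(n^{s})$ petals and only $k^{d}$ super-colours, so the largest class $\mathcal P$ has size $\Omega(n^{s}/k^{d})$. Next I average over all $n$ vertices $u$: for each fixed $u$, the petals of $\mathcal P$ are coloured by $p\mapsto f_u(D\cup p)\in[k]$, so Cauchy--Schwarz yields $\Omega(|\mathcal P|^{2}/k)$ monochromatic pairs; discarding the $O(|\mathcal P|\,n^{s-1})$ intersecting pairs still leaves $\Omega(|\mathcal P|^{2}/k)$ monochromatic \emph{disjoint} pairs once $|\mathcal P|\gg k\,n^{s-1}$. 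Summing over $u$ and averaging over the disjoint pairs of $\mathcal P$ produces two disjoint petals $p_1,p_2\in\mathcal P$ together with a set $W$ of $\Omega(n/k)$ vertices on which $f_u(D\cup p_1)=f_u(D\cup p_2)$. The requirement $\Omega(n^{s}/k^{d})=|\mathcal P|\gg k\,n^{s-1}$, i.e.\ $n\gg k^{d+1}$, is exactly where the bound $k=O(n^{1/(d+1)})$ is consumed.

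Stage 2 (defeating the two base petals). Working inside $W$, which I may take disjoint from $D\cup p_1\cup p_2$, I apply the super-colour idea one level down: colour each $s$-subset $p\subseteq W$ by $\Psi(p)=(f_u(D\cup p))_{u\in p_1\cup p_2}\in[k]^{2s}$. There are $\Theta(|W|^{s})$ such petals and $k^{2s}$ super-colours, so some $\Psi$-class has $\Omega(|W|^{s}/k^{2s})$ petals; as soon as this exceeds the Erd\H{o}s--Ko--Rado bound $O(|W|^{s-1})$ on an intersecting family it contains two disjoint petals $p_3,p_4$, which then collide under every vertex of $p_1\cup p_2$. This step needs $|W|\gg k^{2s}$, and since $|W|=\Omega(n/k)$ it is guaranteed by $n\gg k^{2s+1}$, i.e.\ $k=O(n^{1/(2s+1)})=O(n^{1/(2(r-d)+1)})$, the second half of the bound.

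Finally I assemble $T$ from the core $D$ and the petals $p_1,p_2,p_3,p_4$, adding (when $t>4$) further petals chosen greedily inside $W$ and disjoint from everything already used, which is possible because $|W|=\Omega(n/k)\gg ts$; here the hypothesis $t\ge 4$ is essential so that the two base petals and the two petals defeating them are simultaneously available. Every core vertex is defeated by the pair $(p_1,p_2)$; every petal vertex lying in $W$ (that is, every vertex outside $p_1\cup p_2$) is defeated by the same pair; and every vertex of $p_1\cup p_2$ is defeated by the pair $(p_3,p_4)$. Thus no vertex of $T$ is rainbow, the desired contradiction. I expect the main obstacle to be the bookkeeping of disjointness at every extraction --- guaranteeing that the monochromatic pairs produced by Cauchy--Schwarz and by the pigeonhole are genuinely disjoint petals rather than intersecting ones --- since it is precisely the loss incurred in passing from a monochromatic pair to a monochromatic \emph{disjoint} pair that sharpens the exponent to $1/(d+1)$ (rather than the naive $1/d$) and, more importantly, lets the two constraints be met in parallel so that the final bound is the minimum of the two exponents rather than their denominators accumulating.
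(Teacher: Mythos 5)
Your proposal is correct and follows essentially the same route as the paper's proof: pigeonhole over the $d$ core colourings to get a large family of petals that every core vertex sees monochromatically, then convexity plus pigeonhole to extract one disjoint pair $p_1,p_2$ together with $\Omega(n/k)$ witnessing vertices (this is where $k=O(n^{1/(d+1)})$ is used), and finally pigeonhole over the $2(r-d)$ colourings of $p_1\cup p_2$ inside the witness set (this is where $k=O(n^{1/(2(r-d)+1)})$ is used). The only difference is bookkeeping: you work with all $\binom{n}{r-d}$ petals and enforce disjointness via an intersecting-family bound, while the paper restricts from the outset to a partition of $V\setminus D$ into disjoint $(r-d)$-sets, which makes every extracted pair automatically disjoint.
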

Actually, the sunflower $S_r(0,t)$ is the hypergraph $M_t$ consisting of $t$ independent edges, i.e., a \textit{$t$-matching}. By Theorem $\ref{sunflowers}$, we get the following result.
\begin{corollary}\label{cor}
For any integer $t\geq4$ and $r\geq 3$, $C_r(n,M_t)=\Omega(n^{\frac{1}{2r+1}})$.
\end{corollary}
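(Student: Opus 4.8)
The plan is to obtain this as an immediate consequence of Theorem \ref{sunflowers} by specializing the core size to $d=0$. First I would recall that a $t$-matching $M_t$ --- that is, $t$ pairwise disjoint $r$-edges --- is exactly the sunflower $S_r(0,t)$ whose core $D$ is empty, as observed in the remark preceding the statement. Hence $C_r(n,M_t)=C_r(n,S_r(0,t))$, and Theorem \ref{sunflowers} applies verbatim with $d=0$ for every $t\ge 4$ and $r\ge 3$.

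Substituting $d=0$ into the lower bound of Theorem \ref{sunflowers} gives
\[
C_r(n,M_t)\ \ge\ \min\left\{\,c\,n^{\frac{1}{2(r-0)+1}},\ c\,n^{\frac{1}{0+1}}\,\right\}\ =\ \min\left\{\,c\,n^{\frac{1}{2r+1}},\ c\,n\,\right\}.
\]
Since $r\ge 3$ forces $\frac{1}{2r+1}<1$, we have $c\,n^{\frac{1}{2r+1}}\le c\,n$ for all $n\ge 1$, so the minimum is attained by the first term. Therefore $C_r(n,M_t)\ge c\,n^{\frac{1}{2r+1}}=\Omega(n^{\frac{1}{2r+1}})$ with implied constant $c=c(t,r)$, completing the argument.

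There is no genuine obstacle here beyond the routine simplification of the minimum, since all the substantive work has already been carried out in establishing Theorem \ref{sunflowers}. The only points to verify are that the matching/sunflower identification $M_t=S_r(0,t)$ is correct and that the exponent $\frac{1}{2r+1}$ dominates the linear term, both of which hold for every $r\ge 3$.
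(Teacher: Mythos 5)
Your proposal is correct and is exactly the derivation the paper intends: the corollary is obtained by identifying $M_t$ with the sunflower $S_r(0,t)$ and substituting $d=0$ into Theorem \ref{sunflowers}, with the minimum collapsing to the $n^{\frac{1}{2r+1}}$ term. The paper states this without spelling out the substitution, but your working fills in precisely the same routine steps.
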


In addition, we can give a polynomial lower bound for the 3-graphs with at least 163 edges, which can be directly derived from the following result 
of Erd\H os and Rado \cite{Sunflower} and Theorem \ref{generalbound}. 

Let $g(r,m)$ be the minimum integer $N$ such that any $r$-graph with at least $N$ edges contains a sunflower $S_r(d,m)$ for some $0\leq d\leq r-1$. Erd\H os and Rado \cite{Sunflower} gave the following bounds:
\begin{equation*}
    (m-1)^r\leq g(r,m)\leq(m-1)^rr!+1.
\end{equation*}
By taking $m=4$, we know that any $r$-graph $H$ with at least $g(r,4)$ edges contains a sunflower $S_r(d,4)$ for some $0\leq d\leq r-1$ as a subgraph. 
Since $C_r(n,S_r(d,m))$ is polynomial in terms of $n$ by Theorem \ref{sunflowers}, we can get the following result by Theorem \ref{lemma polynomial subgraph imply  polynomial graph} which will be shown in Subsection \ref{generallowerbound}. 
\begin{theorem}\label{generalbound}
    For any $r$-graph $H$ with at least $g(r,4)$ edges there is a constant $b=b(H)>0$ so that $C_r(n,H)=\Omega(n^b)$.
\end{theorem}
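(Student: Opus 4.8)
The plan is to deduce Theorem \ref{generalbound} from two ingredients already available: the Erd\H{o}s--Rado sunflower bound quoted above and the polynomial lower bound for sunflowers in Theorem \ref{sunflowers}. Since $H$ has at least $g(r,4)$ edges, the very definition of $g(r,4)$ gives a sunflower $S_r(d,4)$ with $0\le d\le r-1$ as a subgraph of $H$. Theorem \ref{sunflowers} then yields $C_r(n,S_r(d,4))=\Omega(n^{b'})$ with the explicit positive constant $b'=b'(r,d)=\min\{\frac{1}{2(r-d)+1},\frac{1}{d+1}\}$. Hence everything reduces to the monotonicity principle announced as Theorem \ref{lemma polynomial subgraph imply polynomial graph}: if an $r$-graph $H'$ is a subgraph of $H$ and $C_r(n,H')=\Omega(n^{b'})$, then $C_r(n,H)=\Omega(n^{b})$ for some $b=b(H,H')>0$. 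Applying this with $H'=S_r(d,4)$ completes the proof, so the bulk of the work is establishing this principle.

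To prove the principle I would argue by reduction in the contrapositive: starting from an $(n,r,H)$-local coloring $\{f_v\}_{v\in[n]}$ with $k$ colors, I would construct an $(m,r,H')$-local coloring on a slightly smaller ground set $[m]$ and then invoke $k\ge C_r(m,H')$ after accounting for a mild blow-up in the palette. Write $h=|V(H)|$, $h'=|V(H')|$, reserve a fixed set $W^*$ of $h-h'$ vertices, and set $m=n-(h-h')=\Omega(n)$. Every copy $T'$ of $H'$ inside $K_m^{(r)}$ extends, by adjoining $W^*$ and filling in the missing edges of $H$ (available since we work in the complete hypergraph), to a copy $T=T'\cup W^*$ of $H$ in $K_n^{(r)}$; checking that this extension is a faithful copy of $H$ for every $T'$ is routine because all required $r$-sets exist. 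For each internal vertex $v\in[m]$ I would define the product coloring $g_v(e):=\big(f_v(e),(f_w(e))_{w\in W^*}\big)$, whose palette has size at most $k^{\,1+(h-h')}$, still polynomial in $k$.

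The main obstacle is that the rainbow vertex guaranteed for $T$ by the $H$-coloring may fall inside the reserved frame $W^*$ rather than inside $T'$, so it cannot serve directly as a rainbow vertex of $T'$; folding the frame colorings $(f_w)_{w\in W^*}$ into every $g_v$ is precisely what overcomes this. In the case where the rainbow vertex $u$ of $T$ lies in $V(T')$, the coloring $f_u$, and hence $g_u$ (which records $f_u$ as a coordinate), separates all edges of $T'\subseteq T$, so $g_u$ rainbow-colors $T'$ with $u\in V(T')$. In the case where the rainbow vertex is some $w_0\in W^*$, the coloring $f_{w_0}$ still separates all edges of $T'$; since the coordinate $f_{w_0}(\cdot)$ appears inside $g_v$ for \emph{every} $v\in[m]$, any $v\in V(T')$ then has $g_v$ rainbow-coloring $T'$. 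Thus in both cases some vertex of $T'$ rainbow-colors $T'$ under $\{g_v\}$, making it a genuine $(m,r,H')$-local coloring. Consequently $k^{\,1+(h-h')}\ge C_r(m,H')=\Omega(m^{b'})=\Omega(n^{b'})$, which gives $C_r(n,H)=k=\Omega\big(n^{\,b'/(1+h-h')}\big)$; taking $b=b'/(1+h-h')>0$ finishes the argument.
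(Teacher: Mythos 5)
Your proposal is correct and follows essentially the same route as the paper: the sunflower bound of Erd\H{o}s--Rado locates an $S_r(d,4)$ inside $H$, Theorem \ref{sunflowers} supplies the polynomial lower bound for that sunflower, and the transfer to $H$ is the subgraph-monotonicity statement that the paper proves as Theorem \ref{lemma polynomial subgraph imply  polynomial graph} via exactly your product-coloring trick with reserved reference vertices. Your version of that lemma is a slightly streamlined variant (you reserve only $h-h'$ vertices and extend copies of $H'$ directly to copies of $H$, rather than passing through $H'$ plus isolated vertices as the paper does), which yields the marginally better exponent $b'/(1+h-h')$ in place of the paper's $b'/(h+1)$, but the mechanism is the same.
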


For the upper bounds, we establish the following theorem for $r$-graphs.
 \begin{theorem}\label{general_bounds}
Given a $r$-graph $H$ consisting of $h$ vertices, it holds that
\[
C_r(n, H) = O\left( n^{\frac{h-r}{h}} \cdot h^{2r + \frac{r}{h}} \right).
\]
\end{theorem}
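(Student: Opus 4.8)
The plan is to exhibit a randomized $(n,r,H)$-local colouring with $k=O(n^{(h-r)/h}h^{2r+r/h})$ colours and to verify it by a union bound over all copies of $H$, generalizing the construction of Alon and Ben-Eliezer for graphs. Throughout write $m=|E(H)|$, so $m\le\binom{h}{r}$ and $\binom{m}{2}\le h^{2r}$; the case $h=r$ is trivial (a single edge is always rainbow), so we may assume $h>r$. The backbone of the construction is a piece of shared randomness that \emph{coordinatizes} the vertex set---concretely, a random labelling $\ell\colon V(K^{(r)}_n)\to[s]$ (or a random tuple of such labellings) with $s$ a small power of $n$ times a polynomial in $h$. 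The guiding principle is that the colour $f_u(e)$ should be assembled from the labels of the vertices of $e$ together with viewpoint-dependent randomness attached to $u$, in such a way that whenever the $h$ vertices of a copy $T$ receive `generic' (sufficiently distinct) labels, the labels already separate the edges of $T$ and \emph{every} vertex of $T$ rainbow-colours it. All the difficulty is then pushed onto the atypical copies on which the labelling exhibits a coincidence.

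First I would pin down the colouring and record the easy direction: a copy $T$ fails to be rainbow-colourable only if its vertices carry a label-coincidence, an event whose probability per copy is $O(\binom{m}{2}/s)$. Secondly, and this is the design constraint that makes the bound nontrivial, I would build the viewpoint-dependent part of $f_u$ so that \emph{for each fixed pair of edges only a few viewpoints can be fooled into colouring them alike}. The purpose of this property is that a single label-coincidence can then defeat only a bounded number of the $h$ viewpoints; to defeat all of them one needs several coincidences simultaneously, and it is the joint improbability of these coincidences that produces the saving over the trivial construction.

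The technical core is to bound, for a fixed copy $T$, the probability $\Pr[A_T]$ that \emph{no} vertex of $T$ rainbow-colours $T$. Conditioning on the labelling and writing $\mathcal C(T)$ for the (random) set of edge-pairs of $T$ that the labels fail to separate, one gets $\Pr[A_T\mid\ell]\le(|\mathcal C(T)|/k')^{h}$, where $k'$ is the number of viewpoint-dependent colours; this already vanishes unless $\mathcal C(T)\neq\varnothing$. Combining the rarity of $\mathcal C(T)\neq\varnothing$ with the $h$-th power coming from the $h$ independent viewpoints, and crucially using the design constraint above to forbid a lone coincidence from fooling every viewpoint, the goal is an estimate of the form $\Pr[A_T]\le \binom{m}{2}^{h}\,n^{-r}/k^{h}$. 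Here $\binom{m}{2}^{h}/k^{h}$ is exactly what independent viewpoints alone would give, and the extra factor $n^{-r}$ is the correlation gain; without it the same scheme yields only a bound linear in $n$.

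Finally I would sum over the at most $n^{h}$ copies of $H$. A union bound needs $\Pr[A_T]<n^{-h}$, and feeding in the displayed estimate gives $k^{h}>\binom{m}{2}^{h}n^{h-r}$, i.e.\ $k>\binom{m}{2}\,n^{(h-r)/h}=O(n^{(h-r)/h}h^{2r})$; balancing the alphabet size $s$ against $k'$ in the optimisation accounts for the residual factor $h^{r/h}$. I expect the main obstacle to be precisely the correlated failure analysis of the previous paragraph: realizing a colouring for which defeating all $h$ viewpoints genuinely forces enough independent label-coincidences to extract the full $n^{-r}$ saving, while simultaneously controlling edge-pairs whose symmetric difference exceeds two (these are less likely to collide under the labels but couple the viewpoints in more complicated ways). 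Getting this trade-off exactly right, rather than losing a factor of $n^{\Theta(1)}$ somewhere, is the crux of the whole argument.
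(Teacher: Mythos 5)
There is a genuine gap, and it stems from a missing idea rather than a fixable technicality: the paper's proof never needs a structured colouring or a correlation analysis, because it does not use a union bound over all $n^{h}$ copies. Instead it takes every $f_v(e)$ uniform and independent, notes that for a fixed copy $H^{*}$ the failure probability is at most $\bigl(h^{2r}/(2k)\bigr)^{h}$, and then observes that the event $A(H^{*})$ is mutually independent of all other such events except the fewer than $\binom{h}{r}\binom{n-r}{h-r}<h^{r}n^{h-r}$ copies sharing an edge with $H^{*}$. The Lov\'asz Local Lemma condition $e\cdot h^{r}n^{h-r}\cdot\bigl(h^{2r}/(2k)\bigr)^{h}<1$ then yields $k=O\bigl(n^{(h-r)/h}h^{2r+r/h}\bigr)$ directly. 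The factor $n^{-r}$ you are trying to manufacture as a ``correlation gain'' is exactly the gap between the dependency degree $h^{r}n^{h-r}$ and the total number of copies $n^{h}$; the LLL harvests it for free.

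Your proposal, by contrast, commits to a union bound, correctly computes that this forces $\Pr[A_T]\le \binom{m}{2}^{h}n^{-r}/k^{h}$, and then leaves precisely that estimate as an unproven ``goal'', explicitly flagging the correlated failure analysis as the unresolved crux. Beyond being incomplete, the plan has an accounting problem: for a label coincidence on a copy to occur with probability at most about $n^{-r}$ (which is what your target bound needs when conditional failure is otherwise governed by independence), the label alphabet $s$ must be polynomially large in $n$; if the labels of the vertices of $e$ enter the colour $f_u(e)$, the palette is then at least $s$, which already exceeds $n^{(h-r)/h}$ for the relevant range, and if they do not enter the colour it is unclear how generic labels force every viewpoint to be rainbow. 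You should also be aware that the Alon--Ben-Eliezer bound $C_2(n,H)=O(n^{(h-2)/h}h^{4})$ that you cite as the model is itself proved by the random-colouring-plus-LLL argument, not by an explicit labelled construction; the explicit $2\lceil\sqrt{n}\rceil$ construction is special to $P_3$. As written, the proposal does not constitute a proof.
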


\noindent \textbf{Organization of the paper.}
The subsequent sections of this paper proceed as follows. 
Section \ref{upperbounds} presents results concerning upper bounds for the local rainbow coloring number. 
Specifically, Subsection \ref{subsection:up-1} contains the proof of Theorem \ref{general_bounds}. 
Subsection \ref{subsection:up-2} establishes Theorems \ref{constantnotlarge} and \ref{3constantupperbound}, while Subsection \ref{subsection:up-3} provides subpolynomial upper bounds for a specific hypergraph ${\rm TC_e}$ (illustrated in Figure \ref{fig:c3}). 
Section \ref{lower_bound} examines lower bounds for the local rainbow coloring number across various graph classes. 
 Subsection \ref{generallowerbound} presents Theorem \ref{lemma polynomial subgraph imply polynomial graph} which show a relationship between the local rainbow coloring number of a graph and that of its subgraphs.
 Subsequent subsections focus on specialized structures: Subsection \ref{sec:cliques} proves Theorem \ref{thm clique} for complete hypergraphs, Subsection \ref{sec:sunflowers} demonstrates Theorem \ref{sunflowers} by establishing polynomial bounds for sunflowers with at least four petals, and finally, Subsection \ref{sec:SP3} concludes with the proof of Theorem \ref{STP3}.

\section{Upper bounds}\label{upperbounds}

In this section, we establish the upper bound of the local rainbow coloring number of general $r$-graphs and some specific hypergraphs. 

\subsection{General bounds} \label{subsection:up-1}
To prove Theorem \ref{general_bounds}, we use probabilistic method and Lov\'{a}sz local lemma.

\begin{lemma}[\cite{alon2016probabilistic} Lov\'{a}sz local lemma]\label{local lem}
Let $A_1, A_2, \cdots, A_n$ be events, with $Pr[A_i]\leq p$ for all i. Suppose that each $A_i$ is independent of a set of all other $A_j$ except for at most $d$ of them. If $$ep(d+1)\leq 1,$$ then with positive probability, none of the events $A_i$ occur.
\end{lemma}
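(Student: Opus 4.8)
The plan is to deduce this symmetric statement from the familiar inductive argument, proving the key estimate by induction on the size of a conditioning set. Since each $A_i$ is mutually independent of all but at most $d$ of the remaining events, let $N(i) \subseteq [n]\setminus\{i\}$ denote a set of size $|N(i)| \le d$ such that $A_i$ is mutually independent of the family $\{A_j : j \notin N(i),\, j \ne i\}$. Put the uniform weight $x := \tfrac{1}{d+1}$. The heart of the proof is the claim that for every index $i$ and every $S \subseteq [n]\setminus\{i\}$,
$$\Pr\!\left[A_i \;\Big|\; \bigcap_{j \in S} \overline{A_j}\right] \le x,$$
from which the lemma follows in one step.

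I would establish this claim by induction on $|S|$, tacitly recording along the way that the relevant intersections have positive probability so that all conditionings are legitimate. The base case $|S|=0$ reads $\Pr[A_i] \le p \le x$, which holds because $ep(d+1)\le 1$ forces $p \le \tfrac{1}{e(d+1)} \le \tfrac{1}{d+1} = x$. For the inductive step, split $S = S_1 \cup S_2$ with $S_1 := S \cap N(i)$ the part on which $A_i$ may depend and $S_2 := S \setminus S_1$ the independent remainder, and write the conditional probability as a ratio:
$$\Pr\!\left[A_i \;\Big|\; \bigcap_{j \in S}\overline{A_j}\right] = \frac{\Pr\!\left[A_i \cap \bigcap_{j\in S_1}\overline{A_j} \;\big|\; \bigcap_{l\in S_2}\overline{A_l}\right]}{\Pr\!\left[\bigcap_{j\in S_1}\overline{A_j}\;\big|\;\bigcap_{l\in S_2}\overline{A_l}\right]}.$$
I would bound the numerator from above by $\Pr[A_i \mid \bigcap_{l\in S_2}\overline{A_l}] = \Pr[A_i] \le p$, using monotonicity and then the mutual independence of $A_i$ from $\{A_l : l \in S_2\}$ (valid since $S_2 \cap N(i) = \emptyset$). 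For the denominator I would apply the chain rule over $S_1 = \{j_1,\dots,j_k\}$, $k \le d$, so that the $s$-th factor is $\Pr[\overline{A_{j_s}} \mid \bigcap_{t<s}\overline{A_{j_t}} \cap \bigcap_{l\in S_2}\overline{A_l}]$, whose conditioning set has size $(s-1)+|S_2| \le |S|-1 < |S|$; the induction hypothesis then bounds each factor below by $1-x$, giving a lower bound $(1-x)^{k} \ge (1-x)^d$. Combining yields $\Pr[A_i \mid \bigcap_{j\in S}\overline{A_j}] \le p/(1-x)^d$, and it remains to verify $p \le x(1-x)^d$.

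This final inequality is the one genuinely delicate point, and it is exactly where the hypothesis $ep(d+1)\le 1$ is spent. Using the elementary estimate $(1-x)^d = \bigl(1-\tfrac{1}{d+1}\bigr)^d = \bigl(1+\tfrac1d\bigr)^{-d} \ge e^{-1}$, one gets $x(1-x)^d \ge \tfrac{1}{e(d+1)} \ge p$, which closes the induction. With the claim in hand I would finish by unwinding the full intersection through the chain rule,
$$\Pr\!\left[\bigcap_{i=1}^n \overline{A_i}\right] = \prod_{i=1}^n \Pr\!\left[\overline{A_i} \;\Big|\; \bigcap_{j<i}\overline{A_j}\right] \ge \prod_{i=1}^n\bigl(1-x\bigr) = \Bigl(1-\tfrac{1}{d+1}\Bigr)^n > 0,$$
applying the claim to each $i$ with $S = \{1,\dots,i-1\}$ to bound each factor below by $1-x$. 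The main obstacle is organising the induction so that the denominator's chain-rule expansion provably refers only to strictly smaller conditioning sets (so the induction hypothesis is legitimately available) and so that every conditioning is on a positive-probability event; everything else is bookkeeping together with the single estimate $(1-\tfrac{1}{d+1})^d \ge e^{-1}$.
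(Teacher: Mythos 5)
The paper does not prove this lemma at all: it is stated verbatim, with a citation to Alon and Spencer's textbook, and used as a black box in the proof of Theorem \ref{general_bounds}, so there is no internal proof to compare against. Your argument is the standard inductive proof of the symmetric Local Lemma (essentially the one in the cited reference, specialising the general lemma to uniform weights $x=\tfrac{1}{d+1}$) and it is correct; the only nit is the degenerate case $d=0$, where $1-x=0$ makes your final product vanish, but there each $A_i$ is mutually independent of all the others and the conclusion follows directly from $\Pr[\bigcap_i \overline{A_i}]=\prod_i(1-\Pr[A_i])\geq (1-1/e)^n>0$.
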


\begin{proof}[Proof of \autoref{general_bounds}]
It is sufficient to prove the theorem for the specific case that $H$ is a complete $r$-graph with $h$ vertices. This is due to the fact that for any $r$-graph $H$ with $h$ vertices, if a collection of colorings is an $(n,r,K_{h}^{(r)})$-local coloring, then it will also ensure an $(n,r,H)$-local coloring.

Let $k = h^{2r+\frac{r}{h}} n^{\frac{h-r}{h}}$. For every $v \in V(K_n^{(r)})$ and $e \in E(K_n^{(r)})$, let $f_v(e)$ be an random variable that follows a uniform distribution among all $k$ possible colors. Note that for $u\neq v$ or $e\neq e'$, $f_v(e)$ is independent of all other events $f_u(e')$. For any $H^{*}$ isomorphic to $H$, we use $A(H^{*})$ to denote the event that the collection of colorings $f_v$ is not an $(n,r,H^{*})$-local coloring collection. Then we will show that none of the events $A(H^{*})$ occur with positive probability. 

For a fixed vertex $v \in V(H^{*})$, the probability that there exist two edges in $E(H^{*})$ receive the same color in $f_v$ can be bounded by 
\[k\cdot\frac{1}{k^2}\cdot \binom{\binom{h}{r}}{2}\leq \frac{h^{2r}}{2k}.\]
Since the colorings are chosen independently, we get 
\[\mathbb{P}[A(H^{*})]\leq \left(\frac{h^{2r}}{2k}\right)^h. \]

For two distinct copies $H_1^{*}$, $H_2^{*}$ of $H$, if $E(H_1^{*})\cap E(H_2^{*})=\emptyset$, then the events $A(H_1^{*})$ and $A(H_2^{*})$ are independent. Thus, each event $A(H_1^{*})$ is mutually independent of all other events $A(H_2^{*})$ besides those for which $|E(H_1^{*})\cap E(H_2^{*})|\geq 1$. Consequently, for every $H^{*}$, $A(H^{*})$ is independent of all but at most $\binom{h}{r}\binom{n-r}{h-r}< h^rn^{h-r}$ other events. By the Theorem \ref{local lem}, we get a positive probability that none of the events $A(H^{*})$ occur, it follows from $$e\cdot h^rn^{h-r}\cdot \left(\frac{h^{2r}}{2k}\right)^h < 1.$$ Hence, there exist $n$ colorings that satisfy the requirements.
\end{proof}

\subsection{Constant bounds} \label{subsection:up-2}

In this section, we prove the following theorem which gives a necessary and sufficient condition for the local rainbow coloring number to be a constant. 

\begin{theorem}\label{constantnotlarge}
For an $r$-graph $H$, $C_r(n,H)$ is bounded by a constant $C(H)$ if and only if $H$ is not $2$-locally large.
\end{theorem}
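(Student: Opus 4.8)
The plan is to prove the two directions of the equivalence separately, after first unpacking the structural meaning of being $2$-locally large. The useful consequence of the definition is a dichotomy: if $H$ is not $2$-locally large then, up to deleting isolated vertices, it lies in a restricted family --- essentially hypergraphs with very few edges whose edges are pairwise (almost) disjoint, all pass through a common core (a sunflower), or form a loose path --- whereas if $H$ is $2$-locally large then it contains one of a short list of ``hard cores'': either enough edges to host a sunflower with four petals, or a tight/cyclic configuration such as the $r$-uniform analogues of ${\rm TP}_3$, ${\rm SP}_3$, ${\rm LC}_3$. I would make this dichotomy precise first, since both directions hinge on it.

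For the forward direction (not $2$-locally large $\Rightarrow$ constant), I would produce an explicit $(n,r,H)$-local coloring with $O(1)$ colors, generalizing the bounded construction behind the ``$\le 7$'' bound of \autoref{3constantupperbound}. The idea is to fix once and for all an auxiliary coloring $g\colon V(K_n^{(r)})\to [c]$ with $c=O(1)$ that is ``generic'' (splitting every small vertex set into distinct classes as often as possible), and then set $f_v(e)$ to be a function only of the pattern of $g$-classes appearing in $e$ together with the incidence relation between $v$ and $e$. For each admissible structural type (disjoint edges, sunflower, loose path) I would check that every copy $T$ of $H$ has a vertex $u\in V(T)$ at which this pattern function is injective on $E(T)$; because the number of edges is bounded and their intersection pattern is rigid, only finitely many local configurations arise and each can be rainbow-coloured at a suitable vertex. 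This reduces to a finite verification once the structural normal form is fixed.

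For the reverse direction ($2$-locally large $\Rightarrow$ unbounded), I would argue that a $2$-locally-large $H$ contains a hard core $H'$ and transfer its lower bound to $H$. If $H$ has at least $g(r,4)$ edges, Erd\H{o}s--Rado yields a sunflower $S_r(d,4)\subseteq H$, and \autoref{sunflowers} together with the subgraph-monotonicity statement \autoref{lemma polynomial subgraph imply polynomial graph} gives $C_r(n,H)=\Omega(n^b)\to\infty$. In the remaining case $H$ has few edges but contains one of the minimal tight/cyclic cores, whose unboundedness is supplied by \autoref{STP3} (and its $r$-uniform generalizations) and \autoref{thm clique}; here I would invoke a monotonicity statement strong enough to preserve even sub-polynomial growth, so that the $\Omega\!\big((\log^{(3)}n/\log^{(4)}n)^{1/16}\big)$-type bounds carry over from the core to $H$.

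The main obstacle I anticipate is the forward construction: finding a single constant-size coloring scheme that simultaneously rainbow-colours every copy of every not-$2$-locally-large $H$ requires the right generic auxiliary coloring and a clean structural normal form, and verifying injectivity of the pattern function at some vertex is where the combinatorial work concentrates. A secondary technical point is a subgraph-monotonicity lemma that transmits not merely polynomial but arbitrary (including sub-polynomial) lower bounds from a core $H'$ to $H$, since \autoref{lemma polynomial subgraph imply polynomial graph} is phrased for the polynomial regime; ensuring that the rainbow vertex of a completed copy of $H$ can be forced to lie inside the embedded copy of $H'$ is the delicate step there.
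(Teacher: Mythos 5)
Your proposal diverges substantially from the paper's proof, and the divergence introduces a genuine gap: both of your directions are made to rest on a structural dichotomy (``not $2$-locally large $\Rightarrow$ disjoint edges / sunflower / loose path''; ``$2$-locally large $\Rightarrow$ contains a four-petal sunflower or a tight/cyclic core'') that you do not prove and that is in fact not correct as stated. For instance, the matching $M_3$ is \emph{not} $2$-locally large (so it lies in the constant-bounded family yet is none of your normal forms in a useful sense), every sunflower with four petals \emph{is} $2$-locally large, and for $r=3$ the non-$2$-locally-large hypergraphs with three edges are ``everything except ${\rm TP}_3$, ${\rm SP}_3$, ${\rm LC}_3$'' --- a family the paper only pins down by computer search. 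No such classification is available for general $r$, and your argument cannot proceed without it.

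The paper's proof (Theorem \ref{stronger}) needs no classification at all, because Definition \ref{def:2-locally large} is engineered to be used directly on both sides. For the upper bound, fix one global order $\sigma$ on $V(K_n^{(r)})$ and colour $f_v(e):=i$ where $e\in T_v^i$; this uses exactly $2r+1$ colours, and ``not $2$-locally large'' says verbatim that every copy $H'$ has a vertex $x$ with $|T_x^i\cap E(H')|\le 1$ for all $i$, i.e.\ $f_x$ is rainbow on $H'$. Your ``generic auxiliary colouring plus pattern function'' would have to rediscover precisely this, and the finite verification you defer is exactly the missing content. For the lower bound, the paper packages all $n$ colourings into a single product colouring of $K_n^{(r+1)}$ (recording $f_{u_i}(e\setminus\{u_j\})$ for all $i,j$), extracts a monochromatic clique via the hypergraph Ramsey bound (Lemma \ref{lemma multicolor hypergraph ramsey}), and embeds $H$ there avoiding the $\sigma$-maximal vertex; $2$-locally-largeness then supplies, for every vertex $x$ of the copy, two edges in some $T_x^i$ which monochromaticity forces to be equal under $f_x$. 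This yields unboundedness for \emph{every} $2$-locally large $H$ in one stroke, with no case analysis. One smaller point in your favour: Theorem \ref{lemma polynomial subgraph imply polynomial graph} gives $C_r(n,H)\ge C_r(n,T)^{1/(|V(H)|+1)}$, which already preserves sub-polynomial unboundedness, so the monotonicity worry you raise is not a real obstruction --- the unproved classification is.
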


We generalize the notion of 2-locally large graphs introduced by Alon and Ben-Eliezer \cite{alon2016probabilistic} to hypergraphs.
Let $H$ be an $r$-graph on $n$ vertices and let $\sigma:V(H)\rightarrow [n]$ be a permutation. 
For each vertex $x \in V(H)$, we define $2r+1$ disjoint sets of edges, denoted as $T_x^i$ for $1\leq i \leq 2r+1$, such that $\bigcup_{i=1}^{2r+1}T_x^{i}=E(H)$. 
The first $r$ sets, $T_x^i$ for $1\leq i \leq r$, consist of all edges $e$ that include $x$, where $\sigma(x)$ is the $i$-th largest element within $e$.
The remaining $r+1$ sets, $T_x^i$ for $r+1 \leq i \leq 2r+1$, consist of all edges $e$ that do not contain $x$, where $\sigma(x)$ would be the $(i-r)$-th largest element if $x$ is added to $e$. Formally,
\[ 
T_x^i = 
\begin{cases} 
\begin{aligned}
&\{e \,|\, e = \{v_1,\ldots,v_{i-1},x,v_{i+1},\ldots,v_r\}, \\
&\phantom{\{e = \{ }  \sigma(v_1) < \cdots < \sigma(v_{i-1}) < \sigma(x) < \sigma(v_{i+1}) < \cdots < \sigma(v_r)\}
\end{aligned}
& \text{if } 1 \leq i \leq r, \\
\begin{aligned}
&\{e \,|\, e = \{v_1,v_2,\ldots,v_r\} \text{ and } x \notin e, \\
&\phantom{\{e = \{ }  \sigma(v_1) < \cdots < \sigma(v_{i-r-1}) < \sigma(x) < \sigma(v_{i-r}) < \cdots < \sigma(v_r)\}
\end{aligned}
& \text{if } r+1 \leq i \leq 2r+1.
\end{cases}
\]

Now, we are ready to give a formal definition of 2-locally large property.

\begin{definition} \label{def:2-locally large}
    An $r$-graph $H$  with $n$ vertices is \emph{2-locally large} if there is a bijection 
    $\sigma:V(H)\rightarrow [n]$ such that for every vertex $x\in V(H)$, there exists one of the sets $T_x^i$, where $i \in [2r+1]$, that  contains at least two elements.
\end{definition}

We prove a stronger theorem, which implies Theorem \ref{constantnotlarge} and provides additional information beyond it.

\begin{theorem}\label{stronger}
Given an $r$-graph $H$ with $h$ vertices, we have
\[
C_r(n,H)
\begin{cases} 
\leq 2r+1 & \text{if } H \text{ is not 2-locally large,} \\
\geq c \left( \frac{\log^{(r)}(n)}{\log^{(r+1)}(n)} \right)^{\frac{1}{(r+1)^2}} & \text{if } H \text{ is 2-locally large,}
\end{cases}
\]

for some constant $c=c(r,h)$.
    
\end{theorem}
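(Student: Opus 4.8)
The plan is to prove both directions by relating an $(n,r,H)$-local coloring to \emph{order-canonical} colorings, in which $f_v(e)$ depends only on the rank (order-type) of $v$ among $e\cup\{v\}$. The partition $E(H)=\bigcup_{i=1}^{2r+1}T_x^i$ records exactly these $2r+1$ order-types of a vertex relative to an edge, so ``2-locally large'' is precisely the assertion that no order-canonical coloring can rainbow-color every copy of $H$. This is the structural hinge of the argument.

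For the upper bound, fix the natural order on $[n]=V(K_n^{(r)})$ and define $f_v(e)\in[2r+1]$ to be the order-type of $v$ relative to $e$: one of $r$ values recording the rank of $v$ inside $e$ when $v\in e$, and one of $r+1$ values recording where $v$ would be inserted into $e$ when $v\notin e$. Given a copy $T$ of $H$ with embedding $\phi\colon V(H)\to[n]$, the order on $[n]$ pulls back to a bijection $\sigma\colon V(H)\to[h]$. Since $H$ is not 2-locally large, for this particular $\sigma$ there is a vertex $x$ with $|T_x^i|\le 1$ for all $i$. Then $f_{\phi(x)}(\phi(e))$ equals the index $i$ with $e\in T_x^i$, so distinct edges of $T$ get distinct colors under $f_{\phi(x)}$ and $\phi(x)$ is rainbow; hence $C_r(n,H)\le 2r+1$.

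For the lower bound, suppose a local coloring uses $k$ colors. I would encode the local data on each $(r+1)$-set: for $S=\{w_1<\cdots<w_{r+1}\}$ let $\chi(S)\in[k]^{(r+1)^2}$ list $f_{w_\ell}(S\setminus\{w_j\})$ over all $\ell,j\in[r+1]$. Apply the multicolor Ramsey theorem for $(r+1)$-uniform hypergraphs with $q=k^{(r+1)^2}$ colors to obtain a $\chi$-monochromatic set $W$ of size $h$ (which exists once $n\ge R_{r+1}(h;q)$). On $W$ the fixed pattern forces the coloring to be order-canonical: reading an edge $e\subseteq W$ together with a vertex $u$ off a common $(r+1)$-superset inside $W$ shows $f_u(e)$ depends only on the order-type of $u$ relative to $e$, the cross-checks between different supersets being automatically consistent because each $f_u(e)$ is one fixed value. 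Now embed $H$ into $W$ using the ordering $\sigma$ witnessing 2-local largeness; for every vertex $x$ the two edges in a repeated class $T_x^i$ receive the same canonical color under $f_{\phi(x)}$, so \emph{no} vertex is rainbow, contradicting the local-coloring property.

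It remains to convert the Ramsey threshold into the stated estimate. Using $\log^{(r)}R_{r+1}(h;q)\le C(r,h)\,q\log q$, the existence of a valid $k$-coloring forces $n<R_{r+1}(h;k^{(r+1)^2})$, hence $C(r,h)\,k^{(r+1)^2}\log\!\big(k^{(r+1)^2}\big)>\log^{(r)}(n)$; writing $L=\log^{(r)}(n)$ and solving $q\log q>L/C(r,h)$ gives $q=\Omega\big(L/\log L\big)=\Omega\big(\log^{(r)}(n)/\log^{(r+1)}(n)\big)$, so $k=q^{1/(r+1)^2}$ yields the exponent $1/(r+1)^2$. I expect the main obstacle to be exactly this quantitative stage: verifying that $\chi$-monochromaticity on $(r+1)$-sets really produces a globally order-canonical coloring on $W$ (the consistency bookkeeping across distinct $(r+1)$-supersets, plus the minor endpoint cases when an edge meets $\min W$ or $\max W$), and pinning the multicolor hypergraph Ramsey estimate down tightly enough to leave only the factor $\log^{(r+1)}(n)$ in the denominator rather than a heavier loss.
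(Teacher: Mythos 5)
Your proposal is essentially the paper's own proof: the same $(2r+1)$-colour order-type construction for the upper bound, and for the lower bound the same $(r+1)^2$-tuple colouring $g$ of $(r+1)$-sets, the multicolour hypergraph Ramsey theorem with the Erd\H{o}s--Rado tower bound, and the same extraction of the exponent $1/(r+1)^2$. The one point you flag as the ``main obstacle'' is indeed the only place your parameters need adjusting: with $|W|=h$ exactly, the case $x\in e_1\cap e_2$ can fail to produce a common $(r+1)$-superset, since $W\setminus(e_1\cup e_2)$ may be empty when $h\le 2r-1$ (the copy of $H$ exhausts $W$). The paper fixes this by taking the monochromatic clique of size $s>h$ and reserving its $\sigma_0$-maximum vertex $y$ outside the embedded copy of $H$, so that $e_1\cup\{y\}$ and $e_2\cup\{y\}$ always exist and place $x$ at the same position; with that one-vertex enlargement (which does not affect the asymptotics) your argument goes through.
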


The proof of Theorem \ref{stronger} requires some tools.
We begin by giving the following two propositions.
\begin{proposition}\label{3edges-2-locally large}
    If an $r$-graph $H$ is $2$-locally large, then $H$ has at least three edges.
\end{proposition}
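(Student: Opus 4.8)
The plan is to argue by contraposition: I will show that if $H$ has at most two edges, then $H$ fails to be $2$-locally large, i.e.\ for \emph{every} bijection $\sigma$ there is a vertex witnessing the failure. The starting observation is structural: for any fixed vertex $x$ and any $\sigma$, the $2r+1$ sets $T_x^1,\dots,T_x^{2r+1}$ form a partition of $E(H)$, where the first $r$ classes consist of edges containing $x$ (sorted by the rank of $\sigma(x)$ inside the edge) and the last $r+1$ classes consist of edges avoiding $x$ (sorted by the insertion position of $\sigma(x)$). Consequently, a class $T_x^i$ can contain at least two elements only if it collects at least two of the few edges of $H$.

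If $|E(H)|\le 1$, then every class $T_x^i$ has at most one element for every vertex $x$ and every $\sigma$, so the $2$-locally large condition already fails at any single vertex of $H$; hence such an $H$ is not $2$-locally large.

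The remaining case is $|E(H)|=2$; write $E(H)=\{e_1,e_2\}$. Here a class $T_x^i$ has at least two elements precisely when it contains \emph{both} $e_1$ and $e_2$. The key point is that an edge containing $x$ always lands in a class with index $i\le r$, whereas an edge avoiding $x$ always lands in a class with index $i\ge r+1$; therefore $e_1$ and $e_2$ can share a class only if $x$ belongs to both of them or to neither. Since $e_1$ and $e_2$ are distinct $r$-sets, their symmetric difference is nonempty, so I can pick a vertex $x\in e_1\setminus e_2$. For this $x$ the edge $e_1$ lies in some $T_x^i$ with $i\le r$ while $e_2$ lies in some $T_x^j$ with $j\ge r+1$, so no class contains both edges and every $T_x^i$ has at most one element. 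As this choice of $x$ does not depend on $\sigma$, the failure occurs for every bijection $\sigma$, and $H$ is not $2$-locally large.

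Combining the two cases gives exactly the contrapositive of the proposition: being $2$-locally large forces $|E(H)|\ge 3$. I do not expect a genuine obstacle here, since the whole argument is an unwinding of the definition; the only point requiring care is the bookkeeping observation that two distinct $r$-edges must differ in their ``through $x$ versus avoiding $x$'' status at any vertex of their symmetric difference, which is what prevents them from sharing a class.
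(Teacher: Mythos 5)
Your proof is correct and follows essentially the same route as the paper: both arguments select a vertex $x$ lying in exactly one edge (in your two-edge case, a vertex of the symmetric difference) and observe that the edge through $x$ falls into a class $T_x^i$ with $i\le r$ while any edge avoiding $x$ falls into a class with $i\ge r+1$, so no class can contain two edges. Your write-up is merely a more explicit unwinding of the paper's one-line argument, with the trivial $|E(H)|\le 1$ case separated out.
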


\begin{proof}
  If $H$ has at most two edges, then there is a vertex $x$ contained in exactly one edge. Now, for any permutation $\sigma$ on the vertices of $H$, we have that $T_x^i$ contains at most one element for every $i\in [2r+1]$.
\end{proof}

\begin{proposition}\label{property 2-locally large}
    For every $r$-graph $H$, if $H$ contains a $2$-locally large subgraph, then $H$ is $2$-locally large.
\end{proposition}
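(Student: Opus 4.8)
The plan is to take a witnessing order for the $2$-locally large subgraph and extend it to all of $H$ without disturbing the two structural features that matter: the internal order of the subgraph's vertices, and the extremal position of every newly added vertex. Concretely, let $H'\subseteq H$ be a $2$-locally large subgraph, witnessed by a bijection $\sigma'\colon V(H')\to[|V(H')|]$. I would define $\sigma\colon V(H)\to[|V(H)|]$ by keeping $\sigma=\sigma'$ on $V(H')$ and assigning the vertices of $V(H)\setminus V(H')$ the largest available labels, in any order. The purpose of this choice is that $\sigma$ restricts to an order-isomorphic copy of $\sigma'$ on $V(H')$, while every new vertex receives a $\sigma$-value larger than every old one. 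I then claim that $\sigma$ witnesses that $H$ is $2$-locally large in the sense of \ref{def:2-locally large}, and I would check the defining condition vertex by vertex, splitting into old vertices ($x\in V(H')$) and new vertices ($x\in V(H)\setminus V(H')$).

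For an old vertex $x\in V(H')$, I would argue that its witnessing class survives the passage from $H'$ to $H$. The key observation is that every edge $e\in E(H')$ has all of its vertices in $V(H')$, so the relative $\sigma$-order of $\sigma(x)$ against $\{\sigma(v):v\in e\}$ coincides with the corresponding relative $\sigma'$-order; hence $e$ falls into exactly the same class $T_x^i$ whether this class is computed inside $H'$ or inside $H$. Since the families $\{T_x^i\}_{i\in[2r+1]}$ partition the entire edge set, the extra edges of $H$ can only enlarge these classes, never move an $H'$-edge out of them. Consequently, whatever class $T_x^i$ contained at least two edges in $H'$ still contains those same two $H'$-edges in $H$, so the condition at $x$ holds.

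For a new vertex $x\in V(H)\setminus V(H')$, I cannot rely on edges through $x$, since such a vertex may be incident to very few edges or none; instead I would use the edges of $H'$, none of which contains $x$. Because every vertex of an $H'$-edge lies in $V(H')$ and is therefore $\sigma$-smaller than $x$, each edge $e\in E(H')$ lands in the single class $T_x^{2r+1}$, namely the class of non-incident edges all of whose vertices precede $x$ in $\sigma$. By Proposition \ref{3edges-2-locally large}, $H'$ has at least three edges, so $T_x^{2r+1}$ contains at least two (indeed at least three) edges, verifying the condition at $x$ as well.

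I expect the only genuinely delicate point to be the treatment of the new vertices: a priori such a vertex could be isolated, so one cannot hope to find a large $T_x^i$ among the incident classes $1\le i\le r$. The resolution---pushing all new vertices to the top of the order so that the existing edges of $H'$ collect in one non-incident class---is precisely what the extra $r+1$ classes $r+1\le i\le 2r+1$ in \ref{def:2-locally large} are designed to provide, and it is what makes the $2$-locally large property monotone under passing to supergraphs. The old-vertex case is then routine once one records the order-preservation of $\sigma$ on $V(H')$ together with the monotonicity of the partition $\{T_x^i\}$ under adding edges.
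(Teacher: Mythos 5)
Your proof is correct and follows essentially the same route as the paper: extend the witnessing bijection $\sigma'$ of the $2$-locally large subgraph $H'$ to $H$ by placing the new vertices above all of $V(H')$, observe that the classes $T_x^i$ of old vertices retain their two witnessing $H'$-edges, and handle each new vertex via the class $T_x^{2r+1}$, which absorbs all of $E(H')$ and is nonempty enough by Proposition \ref{3edges-2-locally large}. You spell out more carefully than the paper why the old vertices' classes are preserved and why the new vertices' top placement forces every $H'$-edge into $T_x^{2r+1}$, but the argument is the same.
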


\begin{proof}
Let $H'$ be a 2-locally large subgraph of $H$ on $h'$ vertices. By Definition \ref{def:2-locally large}, there exists a permutation $\sigma': V(H') \rightarrow [h']$ such that for every vertex $x \in V(H')$, there exists one of the sets $T_x^i$ containing at least two edges, where $i \in [2r+1]$.
Define $\sigma: V(H) \rightarrow [h]$, where $h$ is the number of vertices of $H$, such that $\sigma(x) = \sigma'(x)$ for all $x \in V(H')$ and $\sigma(x)$ is an arbitrary assignment for $x \in V(H) \setminus V(H')$ that ensures $\sigma$ is a permutation on $V(H)$.
Then, we need to verify that for every vertex $x \in V(H)$, there exists at least one set $T_x^i$ that contains at least two edges in $H$.
For $x \in V(H')$, this is true by the 2-locally large property of $H'$.
For $x \in V(H) \setminus V(H')$, $T_x^{2r+1}$ contains at least three edges in $H$ because $H'$ has at least three edges by Proposition \ref{3edges-2-locally large}. 
\end{proof}

Then we introduce the following result about the multicolor Ramsey number for hypergraphs, which plays a crucial role in the proof of Theorem \ref{stronger}.

\begin{theorem} [\cite{https://doi.org/10.1112/plms/s2-30.1.264}]\label{ramsey}
For all integers $k\geq t\geq 2,q\geq 2$, there exists some $N$ such that the following holds. In any $q$-coloring $\chi$: $E(K_N^{(t)})\to 
[q]$, there is a monochromatic
 copy of $K_k^{(t)}$. In other words, there exist $k$ vertices such that each of the $\binom{k}{t}$ $t$-tuples among
 them receive the same color under $\chi$. 
\end{theorem}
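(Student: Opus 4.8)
The plan is to prove Theorem \ref{ramsey} by induction on the uniformity $t$, taking the multicolour pigeonhole principle as the base case $t=1$: a $q$-colouring of the $N$ singletons of $[N]$ has a monochromatic class of size $k$ as soon as $N\geq q(k-1)+1$, so $R_1(k;q)$ is finite. (For the stated range $t\geq 2$ one may equally start from the multicolour graph Ramsey number, which is itself the $t=2$ instance of the construction below.) I then assume the result for $(t-1)$-uniform hypergraphs, i.e.\ that $R_{t-1}(k;q)$ is finite for all $k,q$, and fix a $q$-colouring $\chi:E(K_N^{(t)})\to[q]$.

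The heart of the argument is an Erd\H{o}s--Rado style iterated-pigeonhole construction extracting a long sequence of vertices along which the colour of a $t$-set is determined by its first $t-1$ elements. I build a sequence $a_1,a_2,\dots$ together with a nested family $[N]=S_0\supseteq S_1\supseteq\cdots$ so that $a_i\in S_{i-1}$, $S_i\subseteq S_{i-1}\setminus\{a_i\}$, and the following invariant holds after step $i$: for every $(t-1)$-subset $T\subseteq\{a_1,\dots,a_i\}$, the colour $\chi(T\cup\{v\})$ is the same for all $v\in S_i$. To pass from step $i-1$ to step $i$, I choose $a_i\in S_{i-1}$ arbitrarily; only the newly created $(t-1)$-subsets $T\ni a_i$ require attention, and there are exactly $\binom{i-1}{t-2}$ of them, each inducing a $q$-colouring $v\mapsto\chi(T\cup\{v\})$ of $S_{i-1}\setminus\{a_i\}$. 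Taking the common refinement of these colourings (a colouring into at most $q^{\binom{i-1}{t-2}}$ classes) and applying pigeonhole yields $S_i$ with $|S_i|\geq(|S_{i-1}|-1)/q^{\binom{i-1}{t-2}}$ on which every new subset is constant, while the old subsets inherit constancy from $S_i\subseteq S_{i-1}$. Iterating, a sufficiently large (tower-type) $N$ guarantees that the process runs for at least $m$ steps, where $m$ can be made as large as I wish.

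It remains to harvest a monochromatic $K_k^{(t)}$ from this sequence. The invariant, together with the nesting $a_{i_t}\in S_{i_t-1}\subseteq S_{i_{t-1}}$ for $i_t>i_{t-1}$, shows that for any indices $i_1<\cdots<i_t\leq m$ the colour $\chi(\{a_{i_1},\dots,a_{i_t}\})$ depends only on $i_1,\dots,i_{t-1}$. Hence the rule $\chi'(\{i_1,\dots,i_{t-1}\}):=\chi(\{a_{i_1},\dots,a_{i_{t-1}},a_{i_t}\})$ for any $i_t>i_{t-1}$ is a well-defined $q$-colouring of the $(t-1)$-subsets of $[m-1]$. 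Choosing $m-1\geq R_{t-1}(k;q)$ and applying the induction hypothesis produces a set $I\subseteq[m-1]$ with $|I|=k$ all of whose $(t-1)$-subsets receive a single colour $c$ under $\chi'$; then every $t$-subset of $\{a_i:i\in I\}$ has colour $c$ under $\chi$, giving the desired monochromatic $K_k^{(t)}$ and certifying $N=R_t(k;q)$.

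The main obstacle — and the only genuinely delicate point — is setting up the invariant so that a single pigeonhole step simultaneously neutralises every new $(t-1)$-subset through $a_i$ while preserving what was already achieved, and then tracking the iterated size loss carefully enough to guarantee that a finite $N$ reaches $m-1\geq R_{t-1}(k;q)$ steps. Once the bookkeeping of the nested sets $S_i$ is in place, the collapse to the $(t-1)$-uniform problem and the transfer back are routine.
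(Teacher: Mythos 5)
Your proposal is correct. Note, however, that the paper does not prove this statement at all: it is quoted as Ramsey's classical theorem with a citation to Ramsey's 1930 paper, and the quantitative version the paper actually uses (Lemma~\ref{lemma multicolor hypergraph ramsey}) is likewise cited from Erd\H{o}s--Rado. So there is no in-paper proof to compare against; what you have written is the standard Erd\H{o}s--Rado argument: induction on the uniformity $t$, an iterated-pigeonhole extraction of a vertex sequence $a_1,a_2,\dots$ with nested sets $S_i$ along which the colour of a $t$-set depends only on its first $t-1$ elements, and a collapse to a $(t-1)$-uniform colouring $\chi'$ handled by the induction hypothesis. The key steps check out: new $(t-1)$-subsets through $a_i$ number $\binom{i-1}{t-2}$ and are neutralised by one common-refinement pigeonhole step, old ones inherit constancy from $S_i\subseteq S_{i-1}$, and well-definedness of $\chi'$ follows from $a_{i_t}\in S_{i_t-1}\subseteq S_{i_{t-1}}$. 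A pleasant by-product of your bookkeeping is that it yields exactly the tower-type bound on $r_t(k;q)$ that the paper imports from Erd\H{o}s--Rado, so your argument would make the paper self-contained on this point.
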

Define the \emph{$t$-uniform Ramsey number $r_t(k;q)$} to be the least $N$ for which Theorem \ref{ramsey} is true.
The following Lemma \ref{lemma multicolor hypergraph ramsey} is a corollary of the result of Erd\H{o}s and Rado \cite{Multicolorranmseynumber}.
\begin{lemma}[\cite{Multicolorranmseynumber}]\label{lemma multicolor hypergraph ramsey}
\begin{equation*}
    r_t(k;q)\leqslant2^{2^{.^{.^{.^{2^{(C_tq\log q)k}}}}}}\Big\}~t-1 ~tower,
\end{equation*}
where $C_t$ is a constant depending only on $t$.
\end{lemma}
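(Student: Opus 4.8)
The plan is to prove this as the classical Erd\H{o}s--Rado upper bound on the multicolour hypergraph Ramsey number $r_t(k;q)$, arguing by induction on the uniformity $t$ and reducing a $q$-colouring of the $t$-subsets of $[N]$ to a $q$-colouring of the $(t-1)$-subsets of a slightly smaller ground set.

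First I would dispose of the base cases. For $t=1$ a $q$-colouring of $[N]$ has a monochromatic $k$-set as soon as $N\ge q(k-1)+1$, so $r_1(k;q)\le qk$. For $t=2$ the usual neighbourhood-majority argument gives $r_2(k;q)\le q^{qk}=2^{(q\log_2 q)k}\le 2^{(C_2 q\log q)k}$, a tower of twos of height $t-1=1$ with top entry $(C_2 q\log q)k$; this base case is what fixes the shape of the final bound.

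For the inductive step, given $\chi\colon\binom{[N]}{t}\to[q]$, I would greedily build a sequence $a_1<a_2<\cdots$ together with shrinking candidate sets $[N]=X_0\supseteq X_1\supseteq\cdots$: at stage $i$ pick $a_{i+1}=\min X_i$ and let $X_{i+1}\subseteq X_i\setminus\{a_{i+1}\}$ be a largest subset on which $\chi(L\cup\{v\})$ is constant in $v$ for every $(t-1)$-set $L\subseteq\{a_1,\dots,a_{i+1}\}$ that contains $a_{i+1}$. The crucial economy is that only the $(t-1)$-sets containing the newly added vertex $a_{i+1}$ must be frozen at this stage (the others were already frozen on $X_i$), and there are just $\binom{i}{t-2}$ of them; hence $|X_{i+1}|\ge(|X_i|-1)/q^{\binom{i}{t-2}}$. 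Running this for $m$ stages therefore requires only $N\gtrsim q^{\sum_{i<m}\binom{i}{t-2}}=q^{\binom{m-1}{t-1}}$ (an extra factor of $2$ absorbs the ``$-1$''s). Once the sequence is built, the colour of any $t$-set $\{a_{i_1},\dots,a_{i_t}\}$ depends only on its bottom $t-1$ indices, so it induces a $q$-colouring $\chi'$ of $\binom{[m-1]}{t-1}$; applying the inductive hypothesis, if $m-1\ge r_{t-1}(k;q)$ then $\chi'$ has a monochromatic $K_k^{(t-1)}$, which lifts to a monochromatic $K_k^{(t)}$ for $\chi$. This yields the recursion $r_t(k;q)\le 2\,q^{\binom{r_{t-1}(k;q)}{t-1}}\le 2\,q^{\,r_{t-1}(k;q)^{\,t-1}}$.

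Finally I would unwind this recursion: each application of the map $x\mapsto 2q^{x^{t-1}}$ raises the tower height by exactly one, so starting from the height-one base case one lands on a tower of twos of height $t-1$. The one delicate point, and the place where the care is really needed, is tracking the top entry of that tower: the factor $t-1$ coming from the exponent and the factor $\log q$ coming from rewriting $q^{x}=2^{(\log_2 q)x}$ are produced at every level, and I must check that they land on \emph{lower} levels of the tower and accumulate only into a constant $C_t$ depending on $t$ (morally $C_t\approx(t-1)!\,C_2$), leaving the topmost entry in the form $(C_t q\log q)k$. That bookkeeping of the tower, rather than the combinatorial reduction itself, is the main obstacle.
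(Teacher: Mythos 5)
Your proposal is correct: it is the standard Erd\H{o}s--Rado stepping-up argument (freeze only the $(t-1)$-sets containing the newly added vertex, reduce to a colouring of $(t-1)$-sets determined by the bottom indices, and unwind the recursion $r_t(k;q)\le 2q^{\binom{r_{t-1}(k;q)}{t-1}}$ from the base case $r_2(k;q)\le q^{qk}$), and your bookkeeping of how the $\log q$ and polynomial factors fall onto lower tower levels is the right way to land on the stated form. The paper itself gives no proof of this lemma --- it is quoted directly as a corollary of Erd\H{o}s and Rado --- so your argument is precisely the proof behind the citation rather than an alternative route.
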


Now, we are ready to complete the proof of \autoref{stronger}.
\begin{proof}[Proof of \autoref{stronger}]
    Let $H$ be an $r$-graph on $m$ vertices. 
    First we show that if $H$ is not 2-locally large, then there is an $(n,r,H)$-local coloring of $K_n^{(r)}$ using at most $2r+1$ colorings, and therefore $C_r(n,H)\leq 2r+1$. For every vertex $v\in V(K_n^{(r)})$, we set $f_v(e)=i$ if and only if $e\in T_v^i$. Let $H'$ be a copy of $H$ in $K_n^{(r)}$. Since $H'$ is not 2-locally large, for any permutation of $V(H')$, there is a vertex $x\in V(H')$ such that $|T_x^i\cap E(H')|\leq 1$ for every $i\in [2 r+1]$. Therefore, for any distinct edges $e_1,e_2\in E(H')$, we have $f_x(e_1)\neq f_x(e_2)$.
    
    We now prove the lower bound when $H$ is 2-locally large. Let $\sigma_0:V(K_n^{(r)})\rightarrow [n]$ be the bijection given by Definition \ref{def:2-locally large} and let $K_n^{(r+1)}$ be the complete $(r+1)$-graph on the same vertex set of $K_n^{(r)}$. 
    For each $v\in V(K_n^{(r)})$, let $f_v$ be a $k$-edge-coloring  of $K_n^{(r)}$.
    Based on $\{f_v: v\in V(K_n^{(r)})\}$, we define a $k^{(r+1)^2}$-edge-coloring $g$ of $K_n^{(r+1)}$ in the following way.
    
    For every edge $e = \{u_1, u_2, \ldots, u_{r+1}\}$ in $K_n^{(r+1)}$ where the vertices are ordered such that $\sigma_0(u_1) < \sigma_0(u_2) < \ldots < \sigma_0(u_{r+1})$, we define the coloring of $e$ as an ordered $(r+1)^2$-tuple:
\begin{align*}
g(e)=
\left(
\begin{gathered}
f_{u_1}(e \backslash \{u_{r+1}\}), f_{u_1}(e \backslash \{u_r\}), \ldots, f_{u_1}(e \backslash \{u_1\}), \\
f_{u_2}(e \backslash \{u_{r+1}\}), f_{u_2}(e \backslash \{u_r\}), \ldots, f_{u_2}(e \backslash \{u_1\}), \\
\vdots \\
f_{u_{r+1}}(e \backslash \{u_{r+1}\}), f_{u_{r+1}}(e \backslash \{u_r\}), \ldots, f_{u_{r+1}}(e \backslash \{u_1\})
\end{gathered}
\right).
\end{align*}   
Choose a sufficiently large value for  $s$. 
By Lemma \ref{lemma multicolor hypergraph ramsey}, if $n\geq r_{r+1}(s;k^{(r+1)^2})$, then there exists a monochromatic $K_s^{(r+1)}$. 

Let $y$ be the vertex in $K_s^{(r+1)}$ with the maximum labeling according to $\sigma_0$. Consider a copy $H'$ of $H$ contained in $K_s^{(r+1)}$ such that $y \notin V(H')$ (we can add this additional condition because $s$ is sufficiently large).
Since $H$ is 2-locally large, for each vertex $x\in V(H')$, there exists some $i\in [2 r+1]$ such that $|T_x^i\cap E(H')|\geq 2$. Take two such edges $e_1$ and $e_2$ from $T_x^i\cap E(H')$. 
If $x\not\in e_1$, then $x\not\in e_2$ since $e_1,e_2\in T_x^i$. Now, $e'_1:=\{x\} \cup e_1$ and $e'_2:=\{x\} \cup e_2$ are two edges of that monochromatic $K_s^{(r+1)}$. Note that, after sorting the vertices in $e'_1$ and $e'_2$ according to $\sigma_0$, the position of $x$ is the same in both sets. This implies $f_x(e_1) = f_x(e_2) $ according to the definition of the coloring $g$.
On the other hand, if $x \in e_1$, then $x \in e_2$ as well, since both $e_1$ and $e_2$ belong to $T_x^i$. 
In this case, recall that $y$ is the vertex in $K_s^{(r+1)}$ possessing the maximum labeling according to $\sigma_0$ and is excluded from both $e_1$ and $e_2$. After arranging the vertices in $e''_1 := \{y\} \cup e_1$ and $e''_2 := \{y\} \cup e_2$ based on $\sigma_0$, $y$ takes up the final position in both sets simultaneously. Consequently, $x$ also holds the same position in both sets. Therefore, according to the definition of $g$, we have $f_x(e''_1 \setminus \{y\}) = f_x(e''_2 \setminus \{y\})$. In either case, we finally have $f_x(e_1)=f_x(e_2)$ and thus $K_n^{(r)}$ has no $(n,r,H)$-local coloring by $s$ colors.

Therefore, if $H$ is 2-locally large, then $C_r(n,H)\geq s$.
Since $n\geq r_{r+1}(s;k^{(r+1)^2})$, we have $s\geq c (\frac{\log^{(r)}(n)}{\log^{(r+1)}(n)})^\frac{1}{(r+1)^2}$ for some constant $c=c(r,h)$, as required.
\end{proof}

Applying Theorem \ref{stronger}, we can give a proof of Theorem \ref{3constantupperbound}, 
by distinguishing between all $3$-graphs, based on whether or not they are 2-locally large.
\begin{proof}[Proof of Theorem~\ref{3constantupperbound}]
By Theorem \ref{stronger}, if $H$ is not $2$-locally large, then $C_3(n,H)\leq 7$. 
Combining with Propositions \ref{3edges-2-locally large} and \ref{property 2-locally large}, we only need to show the following statements.
\begin{enumerate} 
\item All 3-graphs with three edges are not 2-locally large unless $H$ is one of ${\rm TP}_3$, ${\rm SP}_3$ and ${\rm LC}_3$, possibly with some isolated vertices added.
\item All 3-graphs with exactly four edges are 2-locally large.
\end{enumerate}

First, we show that $\mathrm{TP}_3$, $\mathrm{SP}_3$ and $\mathrm{LC}_3$ are $2$-locally large. Specifically, for $\mathrm{TP}_3$, we provide a permutation $\sigma$ of the vertices such that $\sigma(c) < \sigma(d) < \sigma(a) < \sigma(b) < \sigma(e)$. For $\mathrm{SP}_3$, we define a permutation $\sigma$ such that $\sigma(f) < \sigma(e) < \sigma(c) < \sigma(b) < \sigma(d) < \sigma(a)$. For $\mathrm{LC}_3$, we offer a permutation $\sigma$ such that $\sigma(a) < \sigma(e) < \sigma(d) < \sigma(c) < \sigma(b) < \sigma(f)$.
It can be checked that each of those permutations satisfies the requirement of Definition \ref{def:2-locally large}.
For other 3-graphs containing three edges that are not 2-locally large, they are verified by the computer program detailed in Appendix \ref{fulu2}. Similarly, Appendix \ref{fulu1} provides a computer program that confirms item 2.
\end{proof}

\noindent \textbf{Remark:} It is not difficult to verify item 1 and item 2 above without computers, but we have to deal with numerous cases. So we use computers to assist with proof. Here we present an example of purely theoretical proof of $M_3$, a 3-graph with three disjoint edges $e_1,e_2,e_3$, where $e_i=\{x_i,y_i,z_i\}$ for $i\in [3]$. We need to show that for any permutation $\sigma:V(M_3)\rightarrow[9]$, there exists some vertex $x\in V(M_3)$ such that $|T_x^i|\leq 1$ for all $i\in [7]$, and we call such vertex $x$ \emph{bad}. Without loss of generality, we assume $\sigma(x_1)=1$. Then $\sigma(y_1)=2$ or $\sigma(z_1)=2$, otherwise the vertex $v$ with $\sigma(v)=2$ is bad. Indeed, we can assume $v\in e_2$, then $e_1\in T_v^5$, $e_2\in T_v^1$ and $e_3\in T_v^4$. Let $\sigma(y_1)=2$ (it does not affect the final result if let $\sigma(z_1)=2$), then $\sigma(z_1)=3$ for the similar reason. But the vertex $v$ with $\sigma(v)=4$ must be bad.  

\subsection{Subpolynomial bounds} \label{subsection:up-3}

The upper bounds of local rainbow colorings for specific graphs $H$, as seen in \cite{JANZER2024134}, are linked to the Erd\H{o}s-Gy\'arf\'as function, a concept from Ramsey Theory introduced by Erd\H{o}s and Shelah \cite{10.5555/21936.25441}. 
The precise values of the Erd\H{o}s-Gy\'arf\'as function for numerous cases remain open, and determining this value is an ongoing research endeavor. More results about the Erd\H{o}s-Gy\'arf\'as function can be seen  \cite{Conlon201549,10.1093/imrn/rnu190,https://doi.org/10.1112/plms/pdu049,Eichhorn2000441,Erdos1997459,https://doi.org/10.1002/jgt.21876,TYJOUR}.

\begin{definition}[\cite{10.5555/21936.25441}]
    Let $p, q, r, n \geq 2$ be positive integers with $q \leq \binom{p}{r}$. An edge-coloring of the $r$-graph $K^{(r)}_n$ is a $(p, q)$-coloring if at least $q$ distinct colors appear among any $p$ vertices. Let $f_r(n, p, q)$ be the smallest positive integer $k$ such that there exists a $k$-edge-coloring of $K^{(r)}_n$ forming a $(p, q)$-coloring.
\end{definition}

Janzer and Janzer \cite{JANZER2024134} gave the following theorem, which will be used to construct the upper bounds of ${\rm TC_e}$.

\begin{theorem}[\cite{JANZER2024134}]\label{r1r}
 For any $r\geq 3$, we have \[f_r(n,r+1,r)\leq e^{(\log n)^{2/5+o(1)}}=n^{o(1)}.\]
\end{theorem}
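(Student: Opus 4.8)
The goal is to construct a $k$-edge-colouring of $K_n^{(r)}$ with $k = e^{(\log n)^{2/5+o(1)}}$ that is an $(r+1,r)$-colouring, that is, one in which every set of $r+1$ vertices spans at least $r$ colours among its $r+1$ edges. I would first record the failure condition in contrapositive form. An $(r+1)$-set $S$ carries exactly $\binom{r+1}{r}=r+1$ edges, each obtained by deleting one vertex of $S$, so two of these edges always meet in exactly $r-1$ vertices; call such a monochromatic pair a \emph{close pair}. The set $S$ sees fewer than $r$ colours precisely when either two distinct colours each appear on at least two edges of $S$, or one colour appears on at least three edges of $S$. Thus the task reduces to building a subpolynomial colouring in which no $(r+1)$-set contains two close pairs.

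The plan is to build the colouring recursively, in the spirit of Mubayi's construction for the graph case $f_2(n,4,3)$ and its refinements by Conlon--Fox--Lee--Sudakov. Partition $[n]$ into $t$ blocks $B_1,\dots,B_t$ of size $n/t$, and give each vertex $v$ a block label $\beta(v)\in[t]$ together with a recursively assigned local colour inside its block. Colour an edge $e$ by a pair: first, a \emph{coarse} colour recording the pattern of block labels of the vertices of $e$; second, when two or more vertices of $e$ lie in one block, the colour given by the recursive colouring restricted to that block. Edges whose vertices spread across many blocks are separated by the coarse colour, while edges concentrated in few blocks are controlled by the recursion.

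To kill collisions between edges that differ in a single cross-block vertex, I would refine the coarse colour with an algebraic gadget: index the blocks by elements of an abelian group and colour a cross-block edge by a function of the block indices chosen to forbid the additive coincidence that corresponds to two close pairs inside one $(r+1)$-set. Taking the block indices from a set free of the relevant linear pattern --- a Salem--Spencer/Behrend-type set --- eliminates the forbidden cross-block configuration while keeping the coarse palette of size $t/e^{\Omega(\sqrt{\log t})}$. Verification is a case analysis on how the $r+1$ vertices of a putative bad set distribute among blocks: if they occupy at least three blocks the coarse colouring already separates any two would-be close pairs; if they occupy a single block the recursion supplies the guarantee; the delicate case is when they straddle exactly two blocks, which is exactly what the algebraic gadget is built to handle.

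Finally I would set up the recursion $g(n)\le g(n/t)\cdot h(t)$, where $g$ counts colours and $h(t)$ is the coarse palette size, and optimise $t=t(n)$. Balancing the roughly $(\log n)/(\log t)$ recursion levels against the per-level cost is what fixes the exponent: a single Behrend gadget of cost $e^{O(\sqrt{\log t})}$ only yields $e^{(\log n)^{1/2+o(1)}}$, and pushing the exponent down to $2/5$ requires iterating the gadget one further level so that the cross-block cost is itself subpolynomial in $t$ before optimising. The main obstacle is precisely this two-block boundary case together with the fine-tuning of the gadget: one must design the cross-block colouring so that no $(r+1)$-set lying across two blocks creates a second close pair, and simultaneously keep its palette small enough that the optimised recursion produces $(\log n)^{2/5}$ rather than $(\log n)^{1/2}$. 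Achieving combinatorial soundness and the sharp quantitative balance at the same time is the crux of the argument.
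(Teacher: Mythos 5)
This statement is not proved in the paper at all: it is imported verbatim from Janzer and Janzer \cite{JANZER2024134}, so there is no in-paper argument to compare against. Judged on its own terms, your proposal gets the correct combinatorial reduction --- an $(r+1)$-set spans $r+1$ edges, any two of which meet in $r-1$ vertices, and it sees fewer than $r$ colours exactly when it contains two monochromatic pairs (either two colours each repeated, or one colour on three edges) --- and it names the right circle of techniques (Mubayi-style recursive block partitions, coarse colours from block patterns, Behrend-type sets to kill additive coincidences). But it is a plan, not a proof.

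The genuine gap is that the construction is never specified and the exponent $2/5$ is never actually produced. By your own accounting, the recursion $g(n)\le g(n/t)\cdot h(t)$ with a Behrend-cost gadget $h(t)=e^{O(\sqrt{\log t})}$ optimises to $e^{(\log n)^{1/2+o(1)}}$, not $e^{(\log n)^{2/5+o(1)}}$; the sentence ``pushing the exponent down to $2/5$ requires iterating the gadget one further level'' is an assertion with no construction, no verification of the two-block boundary case, and no computation showing the iterated recursion balances at exponent $2/5$. You explicitly defer exactly this step as ``the crux of the argument,'' and that crux is the entire content of the theorem: designing a cross-block colouring whose soundness survives the case analysis \emph{and} whose palette is small enough that the optimised recursion beats $1/2$. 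Without that, the argument as written proves at best the weaker bound $f_r(n,r+1,r)\le e^{(\log n)^{1/2+o(1)}}$ (which still gives $n^{o(1)}$ and hence suffices for the application to ${\rm TC}_e$ in this paper, but does not establish the stated theorem). To close the gap you would need either to reproduce the actual Janzer--Janzer construction or to cite the result, as the present paper does.
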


Based on the theorem presented above, it is evident that the local rainbow coloring number of $\mathrm{TP}_3$ is bounded by a subpolynomial function. Nonetheless, $\mathrm{TP}_3$ is not unique in this regard. We proceed to introduce another 3-graph that is 2-locally large and possesses a subpolynomial upper bound on its local rainbow coloring number using a similar construction of colorings. It is a tight $C_3$ with a pendent edge,  defined by
\begin{description}
    \item[${\rm TC}_e$] --- a 3-graph on vertex set $\{a,b,c,d,e\}$ and edge set $\big\{ \{a,b,d\}, \{b,c,d\}, \{c,d,e\}, \{a,c,d\}  \big\}.$
\end{description}

\begin{figure}[H]
   \centering
   \includegraphics[width=0.3\linewidth]{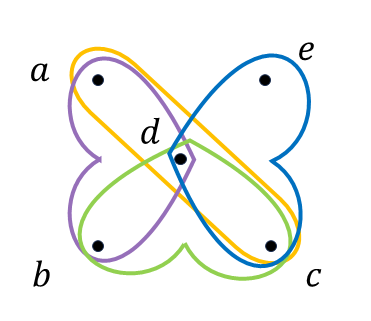}
   \caption{A tight $C_3$ with a pendant edge, abbreviated as ${\rm TC}_e$.}
   \label{fig:c3}
\end{figure}

\begin{theorem}
 $C_3(n,{\rm TC}_e)=n^{o(1)}$.  
\end{theorem}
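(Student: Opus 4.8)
The plan is to exhibit an explicit $(n,3,\mathrm{TC}_e)$-local colouring with $n^{o(1)}$ colours, obtained by refining the order-based $2r+1=7$ colouring from the proof of Theorem~\ref{stronger} with two auxiliary Erd\H{o}s--Gy\'arf\'as colourings supplied by Theorem~\ref{r1r}. Concretely, I would fix a bijection $\sigma_0\colon V(K_n^{(3)})\to[n]$ and write $p_v(e)\in[7]$ for the index $i$ with $e\in T_v^i$, so that $p_v$ records both whether $v\in e$ and the rank of $v$ inside $e\cup\{v\}$ under $\sigma_0$. Applying Theorem~\ref{r1r} with $r=3$ and with $r=4$ yields a $(4,3)$-colouring $\chi\colon E(K_n^{(3)})\to[k_3]$ and a $(5,4)$-colouring $\Psi\colon E(K_n^{(4)})\to[k_4]$ with $k_3,k_4=n^{o(1)}$. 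I then define, for every vertex $v$ and edge $e$,
\[
f_v(e)=
\begin{cases}
(p_v(e),\,\chi(e)), & v\in e,\\
(p_v(e),\,\Psi(e\cup\{v\})), & v\notin e.
\end{cases}
\]
This uses $7(k_3+k_4)=n^{o(1)}$ colours, so the whole content is the verification of the local rainbow property.

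To verify it, fix a copy $T$ of $\mathrm{TC}_e$ on its $5$-set $S$, with edges $E_1=\{a,b,d\}$, $E_2=\{a,c,d\}$, $E_3=\{b,c,d\}$, $E_4=\{c,d,e\}$, and look for a vertex $x\in S$ at which all four $f_x$-values are distinct. The coordinate $p_x$ already separates any two edges that differ in whether they contain $x$; hence $f_x$ is rainbow if and only if each pair of edges with the same $x$-membership gets distinct second coordinates. For two edges both containing $x$ this asks for distinct $\chi$-values or distinct ranks, and for two edges both avoiding $x$ it asks for distinct $\Psi$-values of the two $4$-sets formed by adjoining $x$, or distinct insertion ranks. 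The two scarcity facts I would exploit are: (i) by the $(4,3)$-property, among the four triples of any $4$-subset of $S$ at most one $\chi$-monochromatic pair occurs, so among $E_1,E_2,E_3$ at most one pair is $\chi$-equal; and (ii) by the $(5,4)$-property, among the five $4$-subsets of $S$ at most one $\Psi$-monochromatic pair occurs.

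The decisive configuration, which also shows why the naive product colouring $f_v(e)=(p_v(e),\chi(e))$ fails, is when edges sharing the core $\{c,d\}$ (or the near-$K_4^{(3)}$ triples on $\{a,b,c,d\}$) are simultaneously $\chi$-monochromatic while $\sigma_0$ orders $S$ so that no rank separates them; then $d$, which lies in all four edges, and the other ``interior'' vertices are forced to fail. The resolution is to pass to vertices lying in few edges, for which most edges are coloured through $\Psi$: the edges such a vertex avoids become $4$-subsets of $S$, and I would show that two order-insensitive escape vertices (for $\mathrm{TC}_e$, the pendant vertex $e$ together with one of $a,b$, interchanged by the automorphism $a\leftrightarrow b$) fail \emph{only} if two \emph{distinct} pairs of these $4$-subsets are $\Psi$-monochromatic inside $S$ --- which fact (ii) forbids. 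Hence at least one escape vertex is rainbow.

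The main obstacle is therefore not a single computation but the bookkeeping: I must confirm that across all $\sigma_0$-orderings of $S$ and all $\chi,\Psi$-patterns consistent with (i) and (ii), some vertex works. I would cut this down with the $a\leftrightarrow b$ symmetry and organise by which pairs among $E_1,\dots,E_4$ are $\chi$-equal (at most one pair among $E_1,E_2,E_3$, possibly pairs involving $E_4$, with the single-overlap pair $(E_1,E_4)$ unconstrained), in each case exhibiting two escape vertices whose simultaneous failure would force two $\Psi$-monochromatic $4$-subsets of $S$, contradicting (ii). The same construction and argument should also yield $C_3(n,\mathrm{TP}_3)=n^{o(1)}$, the promised ``similar'' case.
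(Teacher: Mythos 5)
Your construction and verification strategy coincide with the paper's proof: the paper colours an edge $e$ by a $(4,3)$-colouring $\gamma(e)$ of $K_n^{(3)}$ when $v\in e$ and by a $(5,4)$-colouring $\rho(e\cup\{v\})$ of $K_n^{(4)}$ when $v\notin e$ (using disjoint palettes where you use the rank coordinate $p_v$), and it likewise checks rainbowness at the three vertices $a,b,e$ via exactly your facts (i) and (ii) and the $a\leftrightarrow b$ symmetry. The ``bookkeeping'' you defer is in fact a two-line case analysis --- if $a$ and $b$ both fail then WLOG $\gamma(abd)=\gamma(acd)$ and $\rho(abcd)=\rho(bcde)$, whence $abde,acde,bcde$ receive distinct $\rho$-colours and $f_e$ is rainbow --- so your outline is correct and essentially identical to the paper's argument.
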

\begin{proof}
By \autoref{r1r}, there is an edge-coloring $\rho$ of $K_n^{(4)}$ using $e^{(\log n)^{2/5+o(1)}}$ colors such that among any 5 vertices at least 4 colors appear, and an edge-coloring $\gamma$ of $K_n^{(3)}$ using another $e^{(\log n)^{2/5+o(1)}}$ colors such that among any 4 vertices at least 3 colors appear. 
We take a similar coloring rule as above, i.e.
 \[  f_v(e)=\begin{cases}
  \gamma(e) &\text{if $v\in e$,}\\
  \rho(e\cup\{v\}) &\text{if $v\notin e$}.
\end{cases}  \]

Let $abcde$ be any copy of ${\rm TC_e}$ (see \autoref{fig:c3}).
Then we claim that this copy must be rainbow in one of $f_a$, $f_b$ or $f_e$. Indeed, if this copy of ${\rm TC_e}$ is not rainbow in $f_a$, then $\gamma(abd)=\gamma(acd)$ or $\rho(abcd)=\rho(acde)$. If this copy of ${\rm TC_e}$ is not rainbow in $f_b$, then $\gamma(abd)=\gamma(bcd)$ or $\rho(abcd)=\rho(bcde)$. By the definition of $\gamma$ and $\rho$, we know that $\gamma(abd)=\gamma(acd)$ and $\gamma(abd)=\gamma(bcd)$ cannot hold together, nor can $\rho(abcd)=\rho(acde)$ and $\rho(abcd)=\rho(bcde)$. Without loss of generality, we assume $\gamma(abd)=\gamma(acd)$ and $\rho(abcd)=\rho(bcde)$. Now, we see the coloring of $abcde$ in $f_e$, where four edges be colored by $\gamma(cde), \rho(abde), \rho(bcde),\rho(acde)$, is rainbow. 
Hence $C_3(n,{\rm TC_e})\leq e^{(\log n)^{2/5+o(1)}}+e^{(\log n)^{2/5+o(1)}}=n^{o(1)}$.
\end{proof}

\section{Lower bounds}\label{lower_bound}

In this section, we focus 
on the order of magnitude of the lower bound for $C_r(n,H)$.

\subsection{Polynomial bounds}\label{generallowerbound}
The following theorem stems from the work of Janzer and Janzer \cite{JANZER2024134}, in which the core idea was initially introduced. 
To ensure the completeness of our exposition, we present a detailed proof here. 
It shows that if $C_r(n,H')$ is polynomial and $H'$ is a subgraph of $H$, then $C_r(n,H)$ is polynomial.

\begin{theorem}\label{lemma polynomial subgraph imply  polynomial graph}

If $H$ is an $r$-graph and $T$ is a subgraph of $H$, then $C_r(n,H)\geq C_r(n,T)^{\frac{1}{|V(H)|+1}}$.
\end{theorem}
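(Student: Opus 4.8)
The plan is to establish the equivalent inequality $C_r(n,T) \le C_r(n,H)^{|V(H)|+1}$. So I would start from an optimal $(n,r,H)$-local coloring $\{f_v\}_{v \in V(K_n^{(r)})}$ using $k := C_r(n,H)$ colors, and from it manufacture an $(n,r,T)$-local coloring that uses at most $k^{|V(H)|+1}$ colors. Writing $h := |V(H)|$ and $t := |V(T)|$, the engine of the construction is a single fixed set $L$ of $h$ vertices of $K_n^{(r)}$ (for concreteness, the $h$ vertices of smallest label). For every vertex $v$ and every edge $e$ I would define the new color
\[
g_v(e) := \bigl(f_v(e),\, (f_\ell(e))_{\ell \in L}\bigr),
\]
an $(h+1)$-tuple of elements of $[k]$; hence $\{g_v\}$ uses at most $k^{h+1}$ colors, which is exactly the budget we are allowed.

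The key structural observation I would establish next is that every copy $T'$ of $T$, on vertex set $S$ with $|S|=t$, can be extended to a copy $H'$ of $H$ all of whose $m := h-t$ new vertices lie in $L$. This is a pure counting fact: since $|L \cap S| \le t$, we have $|L \setminus S| \ge h - t = m$, so we may place the extra vertices of $H$ on any $m$ distinct vertices of $L \setminus S$; as $K_n^{(r)}$ is complete, any such placement yields a genuine copy of $H$ containing $T'$. Consequently the vertex set of $H'$ satisfies $V(H') = S \cup W \subseteq S \cup L$.

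Now I would apply the hypothesis that $\{f_v\}$ is an $H$-local coloring to $H'$: there is a vertex $u \in V(H')$ with $f_u$ rainbow on $H'$, hence rainbow on the sub-copy $T'$. I would then split into two cases according to where this rainbow vertex sits. If $u \in S$, then $u \in V(T')$ and the first coordinate of $g_u$ equals $f_u$, so $g_u$ separates the edges of $T'$; that is, $g_u$ is rainbow on $T'$. If instead $u \in W \subseteq L$, then $f_u$ appears as one of the coordinates $(f_\ell)_{\ell \in L}$ of $g_s$ for \emph{any} vertex $s \in S$; choosing any $s \in V(T')$, that coordinate already distinguishes the edges of $T'$, so $g_s$ is rainbow on $T'$. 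In both cases some vertex of $T'$ rainbow-colors $T'$, so $\{g_v\}$ is an $(n,r,T)$-local coloring, giving $C_r(n,T) \le k^{h+1}$ and hence the theorem.

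The main obstacle this construction is designed to overcome is precisely the case $u \in W$: the rainbow vertex supplied by the $H$-local coloring may fail to lie in the copy $T'$ we care about, so its rainbow guarantee cannot be transferred directly to a vertex of $T'$. The resolution is the observation that the extension vertices can always be confined to a single globally fixed $h$-set $L$ independent of $T'$; this makes it affordable to hard-code the colorings $f_\ell$ of just these $h$ vertices into every $g_v$, paying only a factor $k^h$, with the extra factor $k$ coming from the coordinate $f_v$ needed to cover the case $u \in S \setminus L$. I would keep an eye on the harmless large-$n$ requirement $n \ge h$ (needed so that $L$ and the extensions exist), and on the routine verification that relabelling $(h+1)$-tuples as integers returns an honest coloring into $[k^{h+1}]$.
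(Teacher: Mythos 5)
Your proof is correct and uses essentially the same mechanism as the paper: augmenting each coloring $f_v$ with the colorings of $h$ fixed reference vertices to get $k^{h+1}$-tuples, then extending any copy of $T$ into those reference vertices so that the rainbow vertex guaranteed for the extension can be transferred to a vertex of the copy of $T$. The only cosmetic difference is that the paper first passes to $T'$ ($T$ plus isolated vertices, a spanning subgraph of $H$) and argues by contradiction, whereas you extend directly to a copy of $H$ and argue constructively.
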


\begin{proof}
To be convenient, let $|V(H)|=h$, $|V(T)|=t$ and $h-t=c>0$. Let $T'$ be the hypergraph obtained from $T$ by adding $c$ distinct isolated vertices, which implies that $T'$ is a spanning subgraph of $H$. By the definition of local rainbow coloring number, we have $C_r(n,T')\leq C_r(n,H)$. Let $k=C_r(n,T')$.
So there are $n$ $k$-edge-colorings $f_v$ of $K_n^{(r)}$ for each $v\in V(K_n^{(r)})$ which form an $(n,r,T')$-local coloring. Let $u_1,\dots,u_{h}$ be arbitrary $h$ distinct vertices in $V(K_n^{(r)})$. For each $v\in V(K_n^{(r)})$, define an edge-coloring $f_v'$ of $E(K_n^{(r)})$ by setting
\[f'_v(e)=(f_v(e),f_{u_1}(e),\dots,f_{u_h}(e)).\]
This is an edge coloring using at most $k^{h+1}$ colors. 

We claim that the coloring arrangement $f'_v$ forms an $(n,r,T)$-local coloring. Otherwise, there exists a copy $F$ of $T$ in $K_n^{(r)}$ such that for every $v\in V(F)$, $F$ contains two edges with the same color in $f'_v$. By definition, any such pair of edges have the same color in all of $f_v,f_{u_1},...,f_{u_h}$. Since $h>t$, we can find a copy $F'$ of $T'$ (obtained by adding $c$ distinct vertices $u_i$ to $F'$) such that for each $v\in V(F')$, $F'$ contains two edges with the same color in $f_v$. This is a contradiction. So we have $C_r(n,T)\le (C_r(n,T'))^{h+1}\le (C_r(n,H))^{h+1}$.
\end{proof}

\subsection{Cliques} \label{sec:cliques}
In this subsection, we provide a polynomial lower bound for the local rainbow coloring numbers of cliques. 

\begin{proof}[Proof of Theorem \ref{thm clique}]
Let $K$ be an abbreviation of $K_{2n}^{(r)}$.
Indeed, we will prove $C_r(2n,K_{p}^{(r)})=\Omega(n^{\frac{r}{r+1}})$ which implies $C_r(n,K_{p}^{(r)})=\Omega(n^{\frac{r}{r+1}})$.
For a contradiction, we assume that $C_r(2n,K_{p}^{(r)})\leq k=c_1n^{\frac{r}{r+1}}$, where $c_1$ is a sufficiently small constant, which means there exists a set of $2n$ $k$-edge-colorings that is an $(2n,r,K_{p}^{(r)})$-local coloring. Let $V(K)=A\cup B$ be a balanced partition, i.e., $|A|=|B|=n$. Define $B^{[r-1]}$ as the set of all $(r-1)$-subsets of $B$. The elements of $B^{[r-1]}$ are sets of the form $\hat{b}=\{b_1,b_2,...,b_{r-1}\}$.
\begin{claim}\label{lemma clique}
Let $k\leq n$ and $c_r=\frac{1}{3r^{2r-2}}$. Now, the number of triples $(a,\hat{b}_1,\hat{b}_2)\in A\times B^{[r-1]}\times B^{[r-1]}$ with $f_a(a\hat{b}_1)=f_a(a\hat{b}_2)$ is at least $c_r\frac{n^{2r-1}}{k}$, where $a\hat{b}_i$ is the abbreviation for $a\cup\hat{b}_i$ for $i=1,2$.
\end{claim}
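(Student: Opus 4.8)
The plan is to count the relevant triples separately for each $a\in A$ and then sum, using convexity (Cauchy--Schwarz) to bound from below the number of monochromatic pairs in each colouring $f_a$. First I would fix a vertex $a\in A$ and restrict attention to the $\binom{n}{r-1}$ edges of $K$ of the form $a\hat b$ with $\hat b\in B^{[r-1]}$; as $\hat b$ ranges over $B^{[r-1]}$ these are pairwise distinct edges, and they are exactly the ones relevant to the claim for this value of $a$. The colouring $f_a$ partitions them into at most $k$ colour classes, so writing $m_{a,j}$ for the number of $\hat b\in B^{[r-1]}$ with $f_a(a\hat b)=j$ we have $\sum_{j=1}^{k} m_{a,j}=\binom{n}{r-1}$. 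For this fixed $a$ the number of ordered pairs $(\hat b_1,\hat b_2)$ with $f_a(a\hat b_1)=f_a(a\hat b_2)$ is precisely $\sum_{j=1}^{k} m_{a,j}^2$, where the diagonal $\hat b_1=\hat b_2$ is counted, in agreement with the way the statement phrases the triple $(a,\hat b_1,\hat b_2)$.

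Next I would apply Cauchy--Schwarz (equivalently, convexity of $x\mapsto x^2$), namely $\sum_{j=1}^{k} m_{a,j}^2\ge \frac1k\bigl(\sum_{j=1}^{k} m_{a,j}\bigr)^2=\frac1k\binom{n}{r-1}^2$; this holds for every $a$ no matter how unbalanced the colour classes are. Summing over all $n$ vertices $a\in A$ then shows that the total number of triples $(a,\hat b_1,\hat b_2)$ with $f_a(a\hat b_1)=f_a(a\hat b_2)$ is at least $\frac{n}{k}\binom{n}{r-1}^2$.

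It remains only to extract the stated constant. I would use the elementary estimate $\binom{n}{r-1}\ge \frac{(n/2)^{r-1}}{(r-1)!}=\frac{n^{r-1}}{2^{r-1}(r-1)!}$, valid once $n\ge 2(r-2)$ since each of the $r-1$ factors in the numerator of $\binom{n}{r-1}$ is at least $n/2$. This gives $\frac{n}{k}\binom{n}{r-1}^2\ge \frac{n^{2r-1}}{k}\cdot\frac{1}{4^{r-1}((r-1)!)^2}$, and then one checks the purely numerical inequality $\frac{1}{4^{r-1}((r-1)!)^2}\ge \frac{1}{3r^{2r-2}}=c_r$, i.e. $3r^{2r-2}\ge 4^{r-1}((r-1)!)^2$, which holds comfortably for all $r\ge 2$ (one can verify it directly for small $r$ and note that $\frac{1}{(r-1)!}(r/2)^{r-1}\to\infty$). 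The whole argument is the standard second-moment/convexity count, so there is no serious obstacle; the only genuine work is the constant bookkeeping in this last step. I would also remark that the hypothesis $k\le n$ is not actually needed for this convexity estimate and is simply carried over from the surrounding proof.
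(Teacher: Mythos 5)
Your proof is correct and follows essentially the same route as the paper: fix $a\in A$, apply convexity over the colour classes of $f_a$ restricted to the edges $a\hat b$ (the paper uses Jensen on $x\mapsto\binom{x}{2}$, you use Cauchy--Schwarz on $x\mapsto x^2$), sum over the $n$ choices of $a$, and then lower-bound $\binom{n}{r-1}$ to extract the constant $c_r$. The one substantive difference is that your count $\sum_j m_{a,j}^2$ includes the diagonal triples $\hat b_1=\hat b_2$, whereas the paper's $\sum_i\binom{|B^{[r-1]}_a(i)|}{2}$ counts only pairs of distinct $(r-1)$-sets --- which is the version the subsequent pigeonhole step (dividing by $\binom{|B^{[r-1]}|}{2}$ to find two \emph{different} sets $\hat b_1,\hat b_2$) actually relies on --- so while your argument does prove the claim as literally stated, for the downstream application you should either subtract the $n\binom{n}{r-1}$ diagonal terms (harmless for large $n$) or work with $\binom{m_{a,j}}{2}$ directly.
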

\begin{proof}
For each vertex $a\in A$, let $B^{[r-1]}_a(i)$ be a family of all $(r-1)$-sets $\hat{b}\in B^{[r-1]}$ satisfying $f_a(a\hat{b})=i$. The number of such triples $(a,\hat{b}_1,\hat{b}_2)$ with $f_a(a\hat{b}_1)=f_a(a\hat{b}_2)$ is at least
    \begin{equation*}
     \sum_{a\in A}\sum_{i=1}^k\binom{|B^{[r-1]}_a(i)|}{2}\geq n\cdot k\binom{\frac{|B^{[r-1]}|}{k}}{2}\geq n\cdot\frac{\binom{n}{r-1}^2}{3k}\geq \frac{n^{2r-1}}{3kr^{2r-2}}=c_r\frac{n^{2r-1}}{k},
    \end{equation*}
 where the first inequality holds by  Jensen's inequality.
\end{proof}

By the pigeonhole principle, there exist two different $(r-1)$-sets $\hat{b}_1,\hat{b}_2$ in $B^{[r-1]}$ such that there are at least 
  \[c_r\frac{n^{2r-1}}{k}\cdot\binom{|B^{[r-1]}|}{2}^{-1}\geq\frac{c_r}{c_1}n^\frac{1}{r+1}\] vertices $a\in A$ satisfying $f_a(a\hat{b}_1)=f_a(a\hat{b}_2)$. There exists a subset $A'\subseteq A$ of size $\frac{c_r}{c_1}n^\frac{1}{r+1}$ satisfying $f_a(a\hat{b}_1)=f_a(a\hat{b}_2)$ for each $a\in A'$.
Then the number of edges of $K[A']$ is $\binom{|A'|}{r}\geq 1000rk$ since $c_1$ is sufficiently small. For each vertex $b\in \hat{b}_1\cup\hat{b}_2$, we will partition $E(K[A'])$ into many disjoint $3$-sets $\{E_b^i\}$ and a remaining set $R_b=E(K[A'])\backslash(\cup_iE_b^i)$ as the following rules. For any $2k+1$ edges, we can find three edges with the same color under $f_b$ by the pigeonhole principle. Suppose we have got $\{E_b^i\}_{i=1}^j$. 
If $|E(K[A'])\backslash(\cup_{i=1}^jE_b^i)|\geq 2k+1$, then we arbitrarily choose three edges with the same color under $f_b$ to form $E_b^{j+1}$, otherwise the process ends and let $R_b=E(K[A'])\backslash(\cup_{i=1}^jE_b^i)$. Each $R_b$ is only a small piece of $E(K[A'])$, so does $\cup_{b\in \hat{b}_1\cup\hat{b}_2}R_b$. Let $E_b=\cup_i E_b^i$. We can choose $e_0\in\cap_{b\in \hat{b}_1\cup\hat{b}_2} E_b$ and $e_b\in E_b$ for each $b\in\hat{b}_1\cup\hat{b}_2$ such that $e_0$ and $e_b$ are two different edges in some $E_b^i$. Let $X=V(\hat{b}_1\cup\hat{b}_2\cup e_0\cup(\cup_b e_b))$. If $|X|<p$, then we arbitrarily add several different vertices of $A'$ to $X$. So we get a set $X$ of size $p$. For each $x\in X$, if $x\in A'$, then $f_x(x\hat{b}_1)=f_x(x\hat{b}_2)$; otherwise $x\in \hat{b}_1\cup\hat{b}_2$ and $f_x(e_0)=f_x(e_x)$. So for any colorings, there is a copy $K[X]$ of $K_{p}^{(r)}$ which is not rainbow under the colorings corresponding to all its vertices, which contradicts our hypothesis.
\end{proof}

\subsection{Sunflowers}\label{sec:sunflowers}

We prove \autoref{sunflowers} which shows that the lower bound of $C_r(n,S_r(d,4))$ is polynomial for all $0\leq d\leq r-1$.

\begin{proof}[Proof of \autoref{sunflowers}]
Let $k=\min\{cn^{\frac{1}{2(r-d)+1}}, cn^{\frac{1}{d+1}}\}$. For any collection of $n$ $k$-edge-colorings $f_v$ of $V(K_n^{(r)})$ with $v\in V(K_n^{(r)})$ and any vertex set $D\subseteq V(K_n^{(r)})$ with $|D|=d<r$. Let $\mathcal{E}$ be a set of all edges in $K_n^{(r)}$ such that $e_1 \cap e_2= D$ and $f_x(e_1)=f_x(e_2)$ for any two different edges $e_1,e_2\in \mathcal{E}$ and any vertex $x\in D$. The size of $\mathcal{E}$ is
\[|\mathcal{E}|\geq \frac{\lfloor\frac{n-d}{r-d}\rfloor}{k^d}\geq \frac{n-r}{(r-d)k^d},\]
where the first inequality holds because we can partition $V(K_n^{(r)})\backslash D$ into many $(r-d)$-sets (which can become $r$-set by adding all vertices in $D$) and a remaining set. The number of such $(r-d)$-set is $\lfloor\frac{n-d}{r-d}\rfloor$, and we using the pigeonhole principle to get the desired $\mathcal{E}$.

\begin{claim}\label{claimsunflower}
Let $\mathcal{E}'\subseteq \mathcal{E}$ with $|\mathcal{E}'|=\lceil \frac{n-r}{(r-d)k^d} \rceil$ and $P=V(K_n^{(r)})\setminus V(\mathcal{E}')$. We have that the number of triples $(e_1,e_2,p)\in \mathcal{E}' \times \mathcal{E}' \times P$ such that $f_p(e_1)=f_p(e_2)$ is at least $ \frac{n^3}{6(r-d)^2k^{2k+1}}$.
\end{claim}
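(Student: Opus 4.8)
The plan is to bound the number of good triples by double counting, fixing the external vertex $p\in P$ first and summing a single convexity estimate over all such $p$. For fixed $p$, I would split $\mathcal{E}'$ into the colour classes of $f_p$, with sizes $m_1,\dots,m_k$ satisfying $\sum_{i=1}^{k}m_i=|\mathcal{E}'|$. A triple $(e_1,e_2,p)$ with $f_p(e_1)=f_p(e_2)$ is precisely an element of $\mathcal{E}'\times\mathcal{E}'$ whose two edges lie in a common class, so the number of good triples with this fixed $p$ equals $\sum_{i=1}^{k}m_i^{2}$, which by the Cauchy--Schwarz inequality (convexity of $x\mapsto x^{2}$) is at least $|\mathcal{E}'|^{2}/k$. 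Summing over $p\in P$ gives at least $|P|\cdot|\mathcal{E}'|^{2}/k$ good triples, reducing the whole claim to lower bounds on $|\mathcal{E}'|$ and $|P|$.

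The bound on $|\mathcal{E}'|$ is given by hypothesis: $|\mathcal{E}'|=\lceil\tfrac{n-r}{(r-d)k^{d}}\rceil\geq\tfrac{n-r}{(r-d)k^{d}}$. For $|P|$ I would use that distinct edges of $\mathcal{E}'$ meet exactly in the core $D$, so their petals are pairwise disjoint and $|V(\mathcal{E}')|=d+(r-d)|\mathcal{E}'|$. Combined with $|\mathcal{E}'|\leq\tfrac{n-r}{(r-d)k^{d}}+1$ this gives $|V(\mathcal{E}')|\leq r+\tfrac{n-r}{k^{d}}$, whence $|P|\geq (n-r)\bigl(1-\tfrac{1}{k^{d}}\bigr)$. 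Substituting both estimates into $|P|\cdot|\mathcal{E}'|^{2}/k$ yields a bound of the form $\tfrac{n^{3}}{c(r-d)^{2}k^{2d+1}}$; tracking the arithmetic with $|P|\ge (n-r)/2$ and $(n-r)^{3}\ge n^{3}/3$ for large $n$ delivers the constant $6$ in $\tfrac{n^{3}}{6(r-d)^{2}k^{2d+1}}$.

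The step I expect to demand the most care is guaranteeing that $P$ retains a constant fraction of the $n$ vertices after removing $V(\mathcal{E}')$, since the convexity estimate is otherwise wasted. This reduces exactly to $k^{d}\geq 2$, i.e.\ to $V(\mathcal{E}')$ occupying at most half the vertex set, which holds in the regime of interest because $k$ grows with $n$ and hence $k^{d}\to\infty$. Granting this, everything else is routine bookkeeping; the one genuine idea is the per-$p$ convexity bound averaged over the large set $P$.
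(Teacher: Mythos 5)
Your proof follows essentially the same route as the paper's: fix $p\in P$, partition $\mathcal{E}'$ into the $k$ colour classes of $f_p$, apply a convexity bound per $p$, and sum over $P$ using $|P|\geq n/2$ and $|\mathcal{E}'|\geq \frac{n-r}{(r-d)k^{d}}$. The only cosmetic difference is that you count ordered pairs via $\sum_i m_i^{2}\geq |\mathcal{E}'|^{2}/k$ (Cauchy--Schwarz) where the paper counts unordered pairs via $\sum_i\binom{m_i}{2}\geq k\binom{|\mathcal{E}'|/k}{2}$ (Jensen); note that the delicate point you flag --- needing $k^{d}\geq 2$ so that $V(\mathcal{E}')$ does not swallow most of the vertex set --- is genuinely problematic when $d=0$, but the paper's own proof asserts $|P|\geq n/2$ there with exactly the same unaddressed issue, so this is not a gap relative to the paper.
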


\begin{proof}
   Since $P=V(K_n^{(r)})\backslash  V(\mathcal{E}')$, we obtain that $|P|\geq n-(d+(r-d)|\mathcal{E}'|)\geq\frac{n}{2}$ for sufficiently 
 large $n$. For any $p\in P$ and $i\in [k]$, let $\mathcal{E}_p'(i)$ be a set of edges $e\in \mathcal{E}'$ such that  $f_p(e)=i$. 
The number of triples $(e_1,e_2,p)\in \mathcal{E}'\times \mathcal{E}'\times P$ such that $f_p(e_1)=f_p(e_2)$ is equal to
    \begin{equation*}\label{eq sunflower}
        \sum_{p\in P}\sum_{i\in[k]}\binom{|\mathcal{E}_p'(i)|}{2}\geq |P|\cdot k\cdot\binom{\frac{|\mathcal{E}'|}{k}}{2}\geq \frac{n}{2}\cdot\frac{n^2}{3(r-d)^2 k^{2d+1}}=\frac{n^3}{6(r-d)^2 k^{2d+1}},
    \end{equation*}
    where the first inequality holds by  Jensen's inequality and the second inequality holds by $k\leq c n^{\frac{1}{d+1}}.$
\end{proof}

Using the pigeonhole principle, there exist two different edges $e,e'\in \mathcal{E}'$ such that there are at least $\frac{n^3}{6(r-d)^2 k^{2d+1}}\cdot\frac{1}{|\mathcal{E}'|^2}\geq n^{1-\frac{1}{2(r-d)+1}}$ vertices $p\in P$ satisfying $f_p(e)=f_p(e')$, where we use $k\leq cn^{\frac{1}{2(r-d)+1}}$ for a small constant $c$. Let $X$ be a set of such vertices $p\in P$. The size of $X$ is at least $m:=\lceil n^{1-\frac{1}{2(r-d)+1}} \rceil$. Let $\mathcal{Q}:=\{Q_1,Q_2,\dots,Q_{\lfloor \frac{m}{r-d}\rfloor}\}$ be a family of disjoint $(r-d)$-subsets of $X$. Note that $Q_i\cup D$ is an edge in $K_n^{(r)}$. Since $c$ is small enough, applying the pigeonhole principle shows that there are at least 
\[\frac{\lfloor\frac{m}{r-d}\rfloor}{k^{2(r-d)}}\geq\frac{\frac{m}{r-d}-1}{(cn^{\frac{1}{2(r-d)+1}})^{2(r-d)}} \geq t-2\]
elements in $\mathcal{Q}$ such that these $Q_i\cup D$ are of the same color under $f_z$ for all vertex $z\in (e\cup e')\backslash D$. Without loss of generality, let $Q_1,Q_2,..,Q_{t-2}$ are $(t-2)$ such $(r-d)$-sets. So, $S'=\{e,e',Q_1\cup D,Q_2\cup D,...,Q_{t-2}\cup D\}$ is a copy of $S_r(d,t)$ such that no coloring $f_v$ corresponding to some vertex $v\in V(S')$ makes $S'$ rainbow.
\end{proof}

\subsection{Special paths in $3$-graph} \label{sec:SP3}

We show that $C_3(n,{\rm SP}_3)$ increases at a rate comparable to $(\frac{\log n}{\log \log n})^{\frac{1}{8}}$. 
This finding is also pertinent to Problem \ref{originalproblem}.

\begin{proof}[Proof of \autoref{STP3}]
We first prove $C_3(n,{\rm SP}_3)=\Omega((\frac{\log n}{\log \log n})^{\frac{1}{8}})$. For any $k>0$, let $n \ge 2k^{(k^4+1)^2}+k^4+3$. It is sufficient to show that for every collection of $n$ edge-colorings $f_v: E(K^{(3)}_n) \to [k]$ on $K_n^{(r)}$,  there exists a copy $T$ of ${\rm SP}_3$ in 
 $K^{(3)}_n$ satisfying that $f_u$ is not a rainbow edge-coloring of $T$ for every $u\in V(T)$.  

Let $Y=\{y_1,y_2,...,y_{k^4+1}\}$ be a subset of $V(K_n^{(3)})$ with $|Y|=k^4+1$. We denote by $X$ the vertex set $V(K^{(3)}_n)\setminus Y$. Let $\mathcal{Z}$ be a collection of all disjoint 2-sets of $X$ and the size of $\mathcal{Z}$ is $\left\lfloor {|X|}/{2} \right\rfloor \ge k^{(k^4+1)^2}+1$. Note that for every $y\in Y$ and $z\in \mathcal{Z}$, $yz:=\{y\}\cup z$ forms an edge in $K^{(3)}_n$. 

For every 2-set $z\in \mathcal{Z}$, consider the vectors 
\begin{align*}
\begin{gathered}
(f_{y_1}(y_1z),f_{y_1}(y_2z),...,f_{y_1}(y_{k^4+1}z),
f_{y_2}(y_1z),f_{y_2}(y_2z),...,f_{y_2}(y_{k^4+1}z),...,\\ 
f_{y_{k^4+1}}(y_1z),f_{y_{k^4+1}}(y_2z),...,f_{y_{k^4+1}}(y_{k^4+1}z)),
\end{gathered}
\end{align*}
and each of them has $(k^4+1)^2$ coordinates. Note that the value of $f_y(zy')$ has $k$ different choices for any two elements $y,y'\in Y$. By the pigeonhole principle, there exists $z_1:=\{x_1,x_2\}, z_2:=\{x_3,x_4\}\in \mathcal{Z}$ such that $z_1,z_2$ have the same vectors. Consider the colorings $f_{x_1},f_{x_2},f_{x_3}$ and $f_{x_4}$. Applying the pigeonhole principle again, there are two vertices $y_1,y_2\in Y$ such that $f_{x_i}(x_1x_2y_1)=f_{x_i}(x_1x_2y_2)$ for $i=1,2,3,4$. We find a $T=\{x_1x_2y_1,x_1x_2y_2,x_3x_4y_2\}$ such that $f_u$ is not a rainbow edge-coloring of $T$ for every $u\in V(T)$.

Next, we give the proof of lower bound of $C_3(n, \rm{SP_4^1})$. Let $k:= cn^{\frac{1}{8}}$, where the constant $c>0$ is sufficiently small. For any set of $3n$ $k$-edge-colorings of $K_{3n}^{(3)}$, it suffices to find a copy $T$ of $\rm{SP_4}$ such that $T$ is not rainbow for every edge coloring assigned to $V(T)$. 

We partition the vertex set of $K_{3n}^{(3)}$ into two parts $A$ and $B$ with $|B|=2|A|=2n$. Let $\mathcal{B}$ be a collection of all disjoint $2$-sets of $B$ and the size of $\mathcal{B}$ is $n$. Consider the number of quadruples $(a,e_1,e_2,e_3)\in A \times \mathcal{B} \times \mathcal{B}\times \mathcal{B}$ such that $f_a(ae_1)=f_a(ae_2)=f_a(ae_3)$. Let $f_a^{-1}(i)$ be the set of elements $e\in \mathcal{B}$ such that $f_a(ae)=i$, but note that there might be some misuse. Observe that for each element $a$ in $A$, there exists a color $i$ such that $a$ contributes at least 
$$\underset{i=1}{\overset{cn^{\frac{1}{8}}}{\sum}} \binom{|f_a^{-1}(i)|}{3}\ge cn^{\frac{1}{8}}\cdot \binom{|\mathcal{B}|/cn^{\frac{1}{8}}}{3}\ge \frac{n^{\frac{11}{4}}}{7c^2}$$
many such quadruples by the convexity of the function $\binom{x}{3}$. Moreover, since there are $n$ choices for $a\in A$, totally there are at least $\frac{n^{\frac{15}{4}}}{7c^2}$ such quadruples of $(a,e_1,e_2,e_3)\in A \times \mathcal{B} \times \mathcal{B}\times \mathcal{B}$. By pigeonhole principle, there are three distinct $2$-sets $e_1, e_2,e_3$ in $\mathcal{B}$ such that there are at least $(\frac{n^{15/4}}{7c^2})/n^3=\frac{n^{3/4}}{7c^2}$ many vertices $a\in A$ satisfying $f_a(ae_1)=f_a(ae_2)=f_a(ae_3)$, and let $A'$ be the subset of $A$ satisfying the above property i.e., for any $a\in A'$, we have $f_a(ae_1)=f_a(ae_2)=f_a(ae_3)$. Then we have $|A'|= \frac{n^{3/4}}{7c^2}$. Let $e_i=u_iv_i$ for each $i\in [3]$. Then we consider the $k^6$-edge-coloring on $A'\cup \{u_1,u_2,u_3,v_1,v_2,v_3\}$ such that 
\[
f(ae_2)=(f_{u_1}(ae_2), f_{u_2}(ae_2), f_{u_3}(ae_2), f_{v_1}(ae_2), f_{v_2}(ae_2), f_{v_3}(ae_2))
\]
for all $a\in A'$ and the other edges we do not care.
By pigeonhole principle, there are at least $(\frac{n^{3/4}}{7c^2})/k^6\ge 2$ elements in $A'$, say $a_1$ and $a_2$, such that $f_v(a_1e_2)=f_v(a_2e_2)$ for any $v\in \{u_1, u_2, u_3, v_1, v_2, v_3\}$. Then we find a copy $T$ of $\rm{SP_4^1}$ on vertex set $\{a_1, a_2, u_1, u_2, u_3, v_1, v_2, v_3\}$ with the following properties:

\begin{itemize}
    \item $f_{a_1}(a_1e_1)=f_{a_1}(a_1e_2)$; $f_{a_2}(a_2e_2)=f_{a_2}(a_2e_3)$;
    \item for any $v\in \{u_1, u_2, u_3, v_1, v_2, v_3\}$, we have $f_v(a_1e_2)=f_v(a_2e_2)$.
\end{itemize}

By applying the same operation---specifically, treating $A$ as a collection of mactchings and $B$ as collection of vertices---we can obtain a copy of $\rm{SP_4^2}$ on vertex set $\{a_1, a_2, a_3, u_1, u_2, v_1, v_2\}$ with the following properties:
\begin{itemize}
    \item $f_{a_1}(a_2u_1u_2)=f_{a_1}(a_2v_1v_2)$; $f_{a_2}(a_2u_1u_2)=f_{a_2}(a_2v_1v_2)$; $f_{a_3}(a_2u_1u_2)=f_{a_3}(a_2v_1v_2)$;
    \item $f_{u_1}(a_1u_1u_2)=f_{u_1}(a_2u_1u_2); f_{u_2}(a_1u_1u_2)=f_{u_2}(a_2u_1u_2); f_{v_1}(a_2v_1v_2)=f_{v_1}(a_3v_1v_2);\\ f_{v_2}(a_2v_1v_2)=f_{v_2}(a_3v_1v_2)$.
\end{itemize}

In order to bound $C_3(n, {\rm SP^1_t})$ and $C_3(n, {\rm SP^2_t})$ for $t\ge 5$, we need the following claim.

\begin{claim}\label{rainbow H}
Let $h,r$ be two positive constants. For an $r$-graph $H$ with $h$ vertices, if there exist $4$ edges $e_1,e_2,e_3,e_4$ in $H$ satisfying $|e_1\cap e_2|=d>0$ and $e_i\cap (e_1\cup e_2)=\emptyset$ for $i=3,4$, then $C_r(n,H)= \Omega(n^{\frac{1}{2(r-d)+1}})$.
\end{claim}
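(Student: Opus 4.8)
The plan is to argue by contradiction, reusing the first half of the proof of \autoref{sunflowers} essentially verbatim and then inserting one extra pigeonhole step that locates $e_3$ and $e_4$. Suppose $C_r(n,H)\le k:=c\,n^{1/(2(r-d)+1)}$ for a sufficiently small constant $c=c(h,r)$, and fix a collection of $n$ $k$-edge-colorings $\{f_v\}$ realizing such a local coloring. Writing $D=e_1\cap e_2$ (so $|D|=d$), $S_1=e_1\setminus D$, $S_2=e_2\setminus D$ (each of size $r-d$) and $d'=|e_3\cap e_4|$, the goal is to build an injective map $\phi:V(H)\to V(K_n^{(r)})$ whose image copy of $H$ is rainbow under no $f_{\phi(x)}$; this contradicts the hypothesis and yields $C_r(n,H)=\Omega(n^{1/(2(r-d)+1)})$. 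First I would fix a $d$-set $D$, partition the remaining vertices into $(r-d)$-sets to form candidate petals through $D$, and pigeonhole on the $k^d$ possible tuples $(f_x(\cdot))_{x\in D}$ to obtain a family $\mathcal{E}$ of at least $\tfrac{n-r}{(r-d)k^d}$ petals, any two of which agree in colour under every $f_x$ with $x\in D$. Then, exactly as in \autoref{claimsunflower} and the pigeonhole step following it, I would count triples $(e,e',p)$ with $e,e'\in\mathcal{E}'$, $p\in P:=V(K_n^{(r)})\setminus V(\mathcal{E}')$ and $f_p(e)=f_p(e')$, and extract two fixed petals $e_1:=e$, $e_2:=e'$ together with a set $X\subseteq P$ of at least $m:=\lceil n^{2(r-d)/(2(r-d)+1)}\rceil$ vertices with $f_p(e_1)=f_p(e_2)$ for all $p\in X$. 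Thus $f_x(e_1)=f_x(e_2)$ for every $x\in D\cup X$.

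The genuinely new ingredient is to find $e_3$ and $e_4$ inside $X$. I would introduce the auxiliary colouring $g(S):=(f_v(S))_{v\in(e_1\cup e_2)\setminus D}$ on $r$-subsets $S$ of $X$, which uses at most $k^{2(r-d)}$ colours. Fixing a $d'$-set $C\subseteq X$ and packing disjoint $(r-d')$-sets into $X\setminus C$ gives about $m/(r-d')$ of them; since $m/(r-d')>k^{2(r-d)}$ when $c$ is small (both quantities are of order $n^{2(r-d)/(2(r-d)+1)}$), the pigeonhole principle produces two such sets $F_3,F_4$ with $g(C\cup F_3)=g(C\cup F_4)$. Setting $E_3:=C\cup F_3$ and $E_4:=C\cup F_4$, we obtain $|E_3\cap E_4|=d'=|e_3\cap e_4|$ and $f_v(E_3)=f_v(E_4)$ for every $v\in(e_1\cup e_2)\setminus D$.

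I would then define $\phi$ by sending $D$ to the core, $S_1,S_2$ to the two petals' non-core vertices, $e_3\cap e_4\mapsto C$, $e_3\setminus e_4\mapsto F_3$, $e_4\setminus e_3\mapsto F_4$, and the constantly many remaining vertices of $H$ injectively into the unused part of $X$, which is possible because $|X|\ge m\gg h$. Since $K_n^{(r)}$ is complete, any such injection realizes a genuine copy of $H$, and each vertex of the copy only needs \emph{one} monochromatic pair, so the other edges of $H$ are harmless. The verification splits into three classes exhausting $V(H)$: for $x\in D$ the pair $e_1,e_2$ is monochromatic under $f_{\phi(x)}$; for every $x$ with $\phi(x)\in X$ (which includes all of $e_3,e_4$ and all leftover vertices) again $e_1,e_2$ is monochromatic; and for $x\in S_1\cup S_2$ the pair $E_3,E_4$ is monochromatic by construction. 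Hence no vertex rainbow-colours the copy, the desired contradiction.

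The hard part will be the final pigeonhole: one must secure enough disjoint $(r-d')$-sets in $X$ to beat the $k^{2(r-d)}$ colours of $g$ while still leaving room for the rest of $H$. This is precisely where the exponent $\tfrac{1}{2(r-d)+1}$ is pinned down, because $m$ and $k^{2(r-d)}$ are the same power of $n$ and only the constant separates them; I would therefore choose $c=c(h,r)$ small enough at the outset that $m/(r-d')-h>k^{2(r-d)}$. I would also keep the bookkeeping of \autoref{sunflowers} in mind, namely that the convexity estimate in the triple count stays valid (equivalently $|\mathcal{E}'|\ge k$), which is what forces $k$ to remain below the relevant power of $n$.
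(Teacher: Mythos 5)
Your proposal is correct and follows essentially the same route as the paper's own proof: a sunflower with core $D$ plus a pigeonhole over the $k^d$ colourings of $D$, a triple count to extract two petals and a large set $X$ agreeing on them, and then a second sunflower inside $X$ with core of size $|e_3\cap e_4|$ pigeonholed over the $k^{2(r-d)}$ colour-vectors of the petals' non-core vertices. The only differences are expository (you spell out the embedding $\phi$ and the vertex-by-vertex verification, which the paper leaves implicit), and the bookkeeping caveat you flag at the end is present in the paper's argument as well.
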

\begin{proof}
Let $k:=cn^{\frac{1}{2(r-d)+1}}$ where the constant $c>0$ is sufficiently small.
We consider the complete $r$-graph $K_n^{(r)}$ and any set of $n$ $k$-edge-colorings of $K_n^{(r)}$. It suffices to find a copy of $H$ such that $H$ is not rainbow for any $k$-edge-coloring assigned to $V(H)$. Given a $d$-vertex set $D:=\{v_1,...,v_d\}\subseteq V(K_n^{(r)})$, we consider the sunflower $S:=S_r(d,\lfloor\frac{n-d}{r-d}\rfloor)$ in $K_n^{(r)}$ with its core $D$. 
By the pigeonhole principle, for each $i\in[d]$,  there exists an edge set $E_i\subseteq  E(S)$ of size at least $\lfloor \frac{\lfloor\frac{n-d}{r-d}\rfloor}{k} \rfloor\ge \frac{1}{c}\cdot n^{1-\frac{1}{2(r-d)+1}}$ such that $f_{v_i}(e')=f_{v_i}(e'')$ for any $e',e''\in E_i$. 
Therefore, we apply the pigeonhole principle to $E(S)$ $d$ times and obtain a edge set $E$ of size at least $\lfloor \frac{\lfloor\frac{n-d}{r-d}\rfloor}{k^d} \rfloor\ge \frac{1}{c^d}\cdot n^{1-\frac{d}{2(r-d)+1}}$ such that $f_{v_i}(e')=f_{v_i}(e'')$ for any $e',e''\in E$ and $i\in [d]$.
Let $P=V(G)\setminus V(S)$. 
Let $f^{-1}_p(i)$ be the set of edges assigned $i$ for the coloring $f_p$. The number of triples $(p,e',e'')\in P \times E \times E$ with $f_p(e')=f_p(e'')$ is at least
$$\underset{p\in P}\sum \underset{i=1}{\overset{k}\sum}\binom{|f_p^{-1}(i)|}{2}\ge \underset{p\in P}\sum k\binom{|E|/k}{2}\ge c'\frac{n^3}{k^{2d+1}}$$
with a sufficiently small positive constant $c'$.

By pigeonhole principle, there exists a pair of edges $e_1,e_2\in E$ and a subset $A\subseteq P$ with $|A|\ge \frac{n^3}{k^{2d+1}}/\binom{|E|}{2} \ge \frac{n}{ck}$ such that $f_a(e_1)=f_a(e_2)$ for each $a\in A$. Let $t:=|e^3\cap e^4|$ and $T:=\{a_1,...,a_t\}\subseteq A$. Let $S':=S_r'(t,\lfloor \frac{|A|-t}{r-t} \rfloor)$ be a sunflower in $A$ with its core $T$. We consider the coloring $f_b$ on $S'$ for any $b\in (e_1\cup e_2)\setminus D$. Since $k=cn^{\frac{1}{2(r-d)+1}}$, applying pigeonhole principle shows that there exists an edge subset $E'\subseteq E(S_r'(t,\lfloor \frac{|A|-t}{r-t} \rfloor))$ of size at least $\frac{\lfloor \frac{|A|-t}{r-t} \rfloor}{k^{2(r-d)}}\ge 2$ such that $f_b(e')=f_b(e'')$ for any $b\in e_1\cup e_2 \setminus D$ and any $e',e''\in E'$. 
Therefore, we find an $H$ copy which is not rainbow for any coloring $f_u$ with $u
\in V(H)$.
\end{proof}

Given that the hypergraphs $\text{SP}^1_t$ and $\text{SP}^2_t$ with $t \geq 5$ incorporate the four edges specified in Claim~\ref{rainbow H}, it follows that $C_3(n, \text{SP}^1_t) = \Omega(n^{\frac{1}{3}})$ and $C_3(n, \text{SP}^2_t) = \Omega(n^{\frac{1}{3}})$ for all $t \geq 5$.
This completes the proof.
\end{proof}

\begin{remark}
Claim \ref{rainbow H} is also of independent interest. Let $\rm{LP}_t$ denote the $3$-uniform loose path with $t$ edges and ${\rm TP}_t$ represent the $3$-uniform tight path with $t$ edges. By extending the same reasoning framework and invoking Claim~\ref{rainbow H}, we establish that $C_3(n, {\rm LP}_t) = \Omega(n^{\frac{1}{5}})$ for $t \geq 5$ and $C_3(n, {\rm TP}_t) = \Omega(n^{\frac{1}{3}})$ for $t \geq 6$.

Furthermore, we introduce a unified notion generalizing both ${\rm LP}_t$ and ${\rm TP}_t$. 
Given a set $V:=\{v_1,v_2,\ldots,v_n\}$, we set $e_i:=\{v_i,v_{i+1},\ldots,v_{i+r-1}\}~(1\leq i\leq t-r+1)$ and let $\mathcal{I}$ be a family such that each $I\in \mathcal{I}$ is a sequentially ordered subset of the ordered set $\{1,2,\ldots,t-r+1\}$, satisfying that $\{1,t-r+1\}\subseteq I$ and the difference between any two consecutive elements in $I$ does not exceed $r-1$. 

Let $\mathcal{P}^{(r)}_t$ be a family of $r$-uniform hypergraphs such that each hypergraph in $\mathcal{P}^{(r)}$ has vertex set $V$ and edge set $E=\{e_i~|~i\in I\}$ for some $I\in \mathcal{I}$ with $|I|=t$. Given ${\rm P}^{(r)} \in \mathcal{P}^{(r)}_t$, if there exists a pair of distinct edges $e,e'\in E$ such that $e({\rm P}^{(r)}[V\setminus (e\cup e')]\ge 2$, then select such a pair that maximizes the intersection $|e\cap e'|$, and denote this maximum value by $\gamma({\rm P}^{(r)})$; and if such a pair does not exists, then we set $\gamma({\rm P}^{(r)})=-\infty$. For example, we have ${\rm LP}_t,{\rm TP}_t\in \mathcal{P}^{(3)}_t$ and moreover,
$\gamma({\rm LP}_t)=1$ if $t\geq 5$ and $\gamma({\rm TP}_t)=2$ if $t\geq 6$.

Now, by Claim \ref{rainbow H}, we have that 
\[C_r(n,{\rm P}^{(r)})=\Omega(n^{\frac{1}{2(r-\gamma({\rm P}^{(r)}))+1}})\]
for each ${\rm P}^{(r)} \in \mathcal{P}^{(r)}_t$ and each $t$.
\end{remark}

\section{Concluding remarks}
Indeed, in Theorem \ref{sunflowers}, we obtain that for every 3-graph $H$ with at least 163 edges, there is a constant $b=b(H)>0$ such that $C_3(n,H)=\Omega(n^b)$. 
It is meaningful to decrease this bound of the number of edges.

In Section \ref{upperbounds}, we show the lower bounds for the local rainbow coloring numbers of sunflowers, matchings, two types of special paths---loose paths with length at least 5 and tight paths with length at least 6. While for loose path with length 4 and tight paths with length $t\in\{3,4,5\}$, we are unable to provide an elegant bound.

As the 3-graph ${\rm TP_3}$ is a subgraph of ${\rm TC_e}$, we can also get an upper bound for ${\rm TP_3}$ 
, which is $C_3(n,{\rm TP_3})\leq C_3(n,{\rm TC_e})^6=n^{o(1)}$. For the upper bounds of another two 3-graphs ${\rm SP_3}$ and ${\rm LC_3}$ related to Problem \ref{originalproblem}, we do not know if they are subpolynomial. This can be proved either by directly giving their upper bound constructions separately or by giving the upper bound construction of a new 3-graph containing both ${\rm SP_3}$ and ${\rm LC_3}$, such as the following 3-graph ${\rm LC_e}$. 
\begin{figure}[H]
    \centering
    \includegraphics[width=0.25\linewidth]{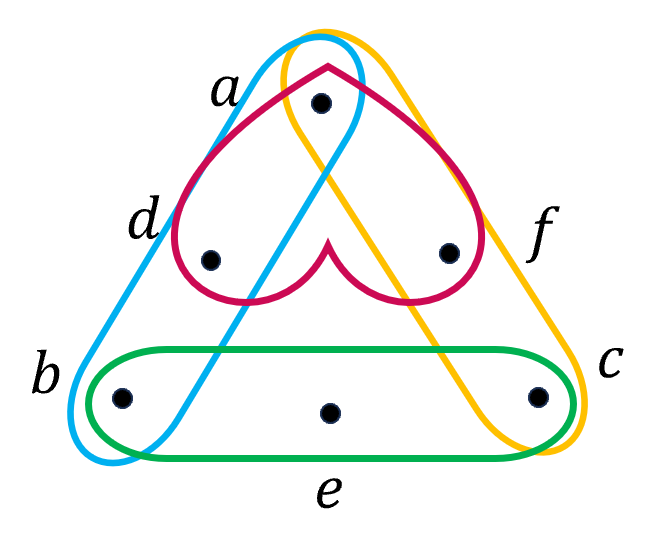}
    \caption{A loose cycle with an added edge $adf$, abbreviated as ${\rm LC_e}$}
    \label{fig:enter-label}
\end{figure}

\section*{Acknowledgments}

The authors would like to express their gratitude to Wei Liu for her assistance with the Python code used in the proof of Theorem~\ref{3constantupperbound}.

\bibliographystyle{plain}
\bibliography{ref.bib}
\appendix
\section{2-locally large}
Indeed, we just need to prove all 3-graphs with exact 4 edges are 2-locally large. By Property \ref{property 2-locally large}, all 3-graphs with at least 4 edges are 2-locally large. The Python code in \ref{fulu1} is to check if all 3-graphs with 4 edges are 2-locally large.
In addition, the Python code in \ref{fulu2} determines whether all graphs with 3 edges are 2-locally and outputs the 2-locally cases.

\subsection{All 3-graphs with 4 edges are 2-locally large} \label{fulu1}
\begin{lstlisting}[style=Python, title=``Python Code for 3-graphs with 4 edges"]                
import itertools
from functools import lru_cache

# Check if x is the i-th largest element in S
@lru_cache(maxsize=None)
def ith(x, S, i):
    # Sort S in descending order
    sorted_S = tuple(sorted(S, reverse=True))
    try:
        index = sorted_S.index(x)
        return index + 1 == i
    except ValueError:
        return False

# Compute graph features for fast non - isomorphism check
@lru_cache(maxsize=None)
def graph_features(H):
    # Sorted edge lengths
    edge_lengths = tuple(sorted(len(edge) for edge in H))
    # Dictionary to store vertex degrees
    vertex_degrees = {}
    for edge in H:
        for vertex in edge:
            vertex_degrees[vertex] = vertex_degrees.get(vertex, 0) + 1
    # Sorted vertex degree sequence
    degree_sequence = tuple(sorted(vertex_degrees.values()))
    return edge_lengths, degree_sequence

# Check if H1 and H2 are isomorphic
@lru_cache(maxsize=None)
def is_isom(k, H1, H2):
    # Compare graph features first
    feats1 = graph_features(tuple(map(tuple, H1)))
    feats2 = graph_features(tuple(map(tuple, H2)))
    if feats1 != feats2:
        return False
    for gamma in itertools.permutations(range(k), k):
        gamma_map = {i + 1: gamma[i] + 1 for i in range(k)}
        transformed_H2 = [tuple(sorted(gamma_map[v] for v in edge)) for edge in H2]
        sorted_H1 = [tuple(sorted(edge)) for edge in H1]
        if sorted(sorted_H1) == sorted(transformed_H2):
            return True
    return False

# Calculate the Tai set
def calculate_Tai(a, i, r, sigma, e1, se1, e2, se2, e3, se3, e4, se4):
    Tai = []
    if i <= r:
        edge_sets = [(e1, se1), (e2, se2), (e3, se3), (e4, se4)]
    else:
        se1_ = se1 + (sigma[a - 1],)
        se2_ = se2 + (sigma[a - 1],)
        se3_ = se3 + (sigma[a - 1],)
        se4_ = se4 + (sigma[a - 1],)
        edge_sets = [(e1, se1_), (e2, se2_), (e3, se3_), (e4, se4_)]
    for e, se in edge_sets:
        if (a in e) == (i <= r) and ith(sigma[a - 1], se, i if i <= r else i - r):
            Tai.append(e)
    return Tai

# Check if the graph formed by e1, ..., e4 has property P
def hasP(k, r, VH, e1, e2, e3, e4):
    rr = 2 * r + 1
    I = range(1, rr + 1)
    # Check if any vertex is not in any edge
    vertex_counts = {v: 0 for v in VH}
    for edge in [e1, e2, e3, e4]:
        for v in edge:
            vertex_counts[v] += 1
    if 0 in vertex_counts.values():
        return False
    for sigma in itertools.permutations(VH, k):
        se1 = tuple(sigma[v - 1] for v in e1)
        se2 = tuple(sigma[v - 1] for v in e2)
        se3 = tuple(sigma[v - 1] for v in e3)
        se4 = tuple(sigma[v - 1] for v in e4)
        gooda_count = 0
        for a in VH:
            for i in I:
                Tai = calculate_Tai(a, i, r, sigma, e1, se1, e2, se2, e3, se3, e4, se4)
                if len(Tai) > 1:
                    gooda_count += 1
                    break
            # Early pruning
            if gooda_count + k - a < k:
                break
        if gooda_count == k:
            return sigma
    return False

# Main program
r = 3
K = [4, 5, 6, 7, 8, 9, 10, 11, 12]
badH = []
goodH = []
E = 4
e1 = list(range(1, r + 1))

for k in K:
    VH = list(range(1, k + 1))
    redges_set = list(itertools.combinations(VH, r))
    # Filter out invalid edges
    valid_edges = [edge for edge in redges_set if r + 1 <= max(edge) <= 3 * r]
    for e2, e3, e4 in itertools.combinations(valid_edges, E - 1):
        e2 = list(e2)
        e3 = list(e3)
        e4 = list(e4)
        Ue = list(set(e1).union(set(e2)).union(set(e3)).union(set(e4)))
        if len(Ue) == k and r + 1 <= max(e2) <= 2 * r and r + 1 <= max(e3) <= 3 * r:
            result = hasP(k, r, VH, e1, e2, e3, e4)
            H2 = [e1, e2, e3, e4]
            if not result:
                exists = any(is_isom(k, tuple(map(tuple, existing_graph)), tuple(map(tuple, H2))) for existing_graph in badH)
                if not exists:
                    badH.append(H2)
                    print(e1, e2, e3, e4, ", bad ! ! !")
                else:
                    print(e1, e2, e3, e4, ", badH exists. ")
            else:
                exists = any(is_isom(k, tuple(map(tuple, existing_graph)), tuple(map(tuple, H2))) for existing_graph in goodH)
                if not exists:
                    goodH.append(H2)
                    print(e1, e2, e3, e4, ", σ =", result)
                else:
                    print(e1, e2, e3, e4, ", goodH exists. ")

print("all bad H:", badH)
print("the NO. of graphs without P:", len(badH))
print("all good H:", goodH)
print("the NO. of graphs with P:", len(goodH))
\end{lstlisting}

\subsection{Not all 3-graphs with 3 edges are 2-locally large}  \label{fulu2}

\begin{lstlisting}[style=Python, title=``Python Code for 3-graphs with 3 edges"]        
import itertools

def ith(x, S, i):
    #"Verify if x is the i-th largest element in S."
    sorted_S = sorted(S, reverse=True)
    return sorted_S.index(x) + 1 == i

def is_isomorphic(H1, H2, k):
    #"Check if hypergraphs H1 and H2 are isomorphic by trying all permutations of vertices."
    VH = list(range(1, k + 1))
    for gamma in itertools.permutations(VH, k):
        # Apply permutation to H2 and compare with H1
        perm_H2 = [[gamma[vertex - 1] for vertex in edge] for edge in H2]
        if all(sorted(edge) == sorted(perm_edge) for edge, perm_edge in zip(H1, perm_H2)):
            return True
    return False

def has_property(k, r, VH, e1, e2, e3, e4):
    #"Check if the hypergraph defined by edges e1, e2, e3, e4 has property P."
    rr = 2 * r + 1
    I = range(1, rr + 1)
    for sigma in itertools.permutations(VH, k):
        se = lambda edge: [sigma[vertex - 1] for vertex in edge]
        se1, se2, se3, se4 = se(e1), se(e2), se(e3), se(e4)
        
        goodv_count = sum(
            any(
                (a in edge and ith(sigma[a - 1], se(edge), i)) or
                (a not in edge and ith(sigma[a - 1], se(edge) + [sigma[a - 1]], i - r))
                for i in I
            )
            for a in VH for edge in [e1, e2, e3, e4]
        )

        if goodv_count >= k:
            return sigma
    return None

# Main program
k = 4
r = 3
VH = list(range(1, k + 1))
badH = []
goodH = []

redges_set = list(itertools.combinations(VH, r))
for e2, e3, e4 in itertools.combinations(redges_set, 3):
    e1 = tuple(range(1, r + 1))
    Ue = set(e1).union(e2, e3, e4)
    
    if len(Ue) == k and r + 1 <= max(e2) <= 2 * r and r + 1 <= max(e3) <= 3 * r:
        result = has_property(k, r, VH, e1, e2, e3, e4)
        if not result:
            if not any(is_isomorphic(H, [e1, e2, e3, e4], k) for H in badH):
                badH.append([e1, e2, e3, e4])
                print(e1, e2, e3, e4, ", bad ! ! !")
        else:
            print(e1, e2, e3, e4, ", σ =", result)

print("All bad H:", badH)
print("Number of graphs without P:", len(badH))
\end{lstlisting}
\end{document}